\def\N{\mathbb N}
\def\X{\mathcal X}
\def\Y{\mathcal Y}
\def\G{\mathcal G}
\def\H{\mathcal H}
\def\V{\mathcal V}
\def\C{\mathcal C}
\def\D{\mathcal D}
\def\M{\mathcal M}
\newtheorem{theorem}{Theorem}[section]
\newtheorem{lemma}[theorem]{Lemma}
\newtheorem{corollary}[theorem]{Corollary}
\newtheorem{remark}[theorem]{Remark}
\newtheorem{conjecture}[theorem]{Conjecture}
\title{Universal graphs with forbidden wheel minors}
\author{Thilo Krill}
\address{Universit\"at Hamburg, Department of Mathematics, Bundesstrasse 55 (Geomatikum), 20146 Hamburg, Germany}
\email{thilo.krill@uni-hamburg.de}
\keywords{infinite graphs; universal graphs; forbidden minor; wheel; cycle}
\begin{document}

\begin{abstract}
    Let $W$ be any wheel graph and $\mathcal{G}$ the class of all countable graphs not containing $W$ as a minor. We show that there exists a graph in $\mathcal{G}$ which contains every graph in $\mathcal{G}$ as an induced subgraph.
\end{abstract}

\maketitle

\section{Introduction}

A graph $\Gamma$ is \emph{weakly universal} in a class of graphs $\G$ if $\Gamma\in\G$ and each $G\in\G$ is isomorphic to a subgraph of $\Gamma$. If additionally each $G\in\G$ is isomorphic to an induced subgraph of $\Gamma$, then we say that $\Gamma$ is \emph{strongly universal}.

Our main result reads as follows:
\begin{theorem}\label{thm_main_universal}
There exists a strongly universal graph in the class of all (countable) $X$-minor-free graphs if $X$ is one of the following:
\begin{itemize}
\item a cycle,
\item the union of two cycles that intersect in exactly one edge, or
\item a wheel, i.e. a graph consisting of a cycle together with an additional vertex which is adjacent to all vertices on the cycle.
\end{itemize}
\end{theorem}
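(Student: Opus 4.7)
The plan is to prove the theorem in three stages corresponding to the three bullets, from the simplest forbidden minor to the most intricate. The common strategy is to identify a structural decomposition of the graphs in $\G$ into a tree of ``simple'' pieces glued along small adhesion sets, and then to construct $\Gamma$ as a universal object of the same shape whose pieces are drawn from a sufficiently rich family. Strong universality will be obtained at the end by a back-and-forth argument that traverses the decomposition tree of an input graph and matches each piece to a node of $\Gamma$.

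When $X$ is a cycle, the class $\G$ is that of countable forests, so I would first exhibit a strongly universal countable tree $T_0$: for instance, a suitably ``saturated'' rooted tree in which every rooted finite tree embeds as an induced subtree below every vertex, obtained via a Fra\"iss\'e-style union of an amalgamation class. A countable disjoint union of copies of $T_0$ then yields the universal forest. This case is both a warm-up and a building block for the others.

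When $X$ is two cycles sharing an edge, the key preliminary step is a structural lemma showing that any such graph decomposes as a tree of blocks that are either single edges or cycles, meeting only in cut vertices (a cactus-like description). From this, $\Gamma$ is built as a universal tree of cycles and edges in which cycles of every (finite or infinite) length appear cofinally often at every branching, so every input cactus can be routed into $\Gamma$. When $X$ is a wheel $W_n$, I would prove an analogous but richer structure theorem: every $W_n$-minor-free graph admits a tree-decomposition of small adhesion whose torsos are themselves of the previous cactus type, perhaps enriched by bounded-size patches whose size depends only on $n$. The universal object is then assembled by placing universal torsos from the previous stage at the nodes of a universal decomposition tree, using the earlier constructions as black boxes.

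The main obstacle I expect is upgrading weak universality to strong universality. Simply embedding each torso of an input graph as an induced subgraph of a torso of $\Gamma$ is insufficient, because once pieces are glued along adhesion sets, unwanted edges or missing edges may appear between vertices of adjacent pieces, or the adhesion sets may fail to sit in compatible positions. To handle this I would tag each adhesion set with a ``type'' that records the induced subgraph on it together with the rooted isomorphism class of the torsos on either side, and ensure in the construction that every admissible type appears cofinally often under every node of the decomposition tree of $\Gamma$. A back-and-forth argument along the tree-decomposition, choosing at each step a child node of the correct type, then yields the desired induced embedding of any $G\in\G$ into $\Gamma$.
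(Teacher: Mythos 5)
Your plan founders on the structural claims it is built on, which are only correct for the very smallest members of each family. For a cycle $C_n$ with $n\geq 4$, the $C_n$-minor-free graphs are not the forests: any graph whose blocks contain triangles but no long paths is $C_n$-minor-free, so a universal forest does not cover this case. Similarly, for $C_{n,m}$ (two cycles sharing an edge) with $(n,m)\neq(3,3)$, the class is not the class of cacti: $K_4$ is $C_{4,4}$-minor-free but is 3-connected and not a cycle. And for a wheel $W_k$, there is no tree-decomposition into cactus-like torsos with bounded-size patches; for instance $K_{3,\omega}$ is an infinite 3-connected $W_k$-minor-free graph (for large $k$) that is neither a cactus nor of bounded size. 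The correct common denominator, which your proposal never reaches, is not that the pieces are cycles or edges but that they contain \emph{no long paths}: every 2-connected graph with arbitrarily long paths has $C_n$ and $C_{n,m}$ as minors, and every 3-connected graph with a sufficiently long path has $W_k$ as a minor (for infinite graphs this requires the unavoidable-minor theorems of Oporowski, Oxley and Thomas together with Halin's lemma on rayless $k$-connected graphs). Because of this, the essential missing ingredient in your argument is a strongly universal graph for the class of graphs with no $X$ minor and no $P_n$ minor; the paper obtains this by extending Cherlin and Tallgren's forbidden-path universality theorem to edge- and vertex-coloured graphs, and nothing in your outline substitutes for it --- a Fra\"iss\'e-style universal tree handles only the case $X=C_3$, and ``cycles of every length appearing cofinally'' handles only cacti, i.e.\ $X=C_{3,3}$. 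For the wheel case you also need, and do not supply, a decomposition of arbitrary countable graphs of adhesion at most 2 into 3-connected torsos and cycles (Richter's infinite analogue of Tutte's theorem).

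On the one genuine difficulty you do identify --- upgrading weak to strong universality across adhesion sets --- your idea of tagging adhesion sets with ``types'' and running a back-and-forth along the decomposition tree is in the same spirit as the paper's solution, but as stated it is not yet an argument. The problem is not only to route each torso of $G$ to a torso of $\Gamma$ of matching type, but to build the universal pieces so that every type is realized inside them while no forbidden minor is created when pieces are amalgamated along 2-element adhesion sets that may or may not span an edge of $G$. The paper resolves this by working throughout with 2-edge-coloured graphs (virtual torso edges get a second colour and are deleted at the end), which is exactly why the coloured strengthening of the path-free universality lemma is needed; unrestricted Fra\"iss\'e-type amalgamation inside a minor-closed class is not available, since amalgamation can create new minors. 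So the proposal, while sensible in overall shape (tree-shaped gluing of universal pieces), is missing both the correct structure theory and the core universality lemma for the pieces.
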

\noindent The case where $X$ is a cycle has previously been proven by Komjáth, Mekler, and Pach \cite{komjathmeklerpach}, and we provide an alternative proof.

Universal graphs are a well studied field in infinite graph theory.
For instance, Füredi and Komjáth characterised for which finite 2-connected graphs $X$ there exists a strongly or weakly universal graph without $X$ as a subgraph \cite{furedikomjath}. Cherlin, Shelah, and Tallgren characterised the same when $X$ is a tree \cites{cherlintallgren,cherlinshelah}.

In contrast, very few is known about the existence of universal $X$-minor-free graphs for graphs $X$ which are not listed in Theorem \ref{thm_main_universal}. If $X$ is a path or a path with an adjoined edge, then the graphs without $X$ as a minor are precisely those without a subgraph isomorphic to $X$, and by \cite{cherlintallgren} there is a strongly universal graph in this class. Other than that, the existence of universal $X$-minor-free graphs is so far only known for some graphs $X$ on few vertices (see \cite{diestelsurvey}). For example, it follows from structure theorems of Halin and Wagner that a weakly universal $X$-minor-free graph exists when $X$ is the graph $K_{2,3}$ \cite{halinwagner}, or $K_5$ minus an edge \cite{wagner2}, or the Cartesian product of $C_3$ with $K_2$ \cite{halin4}. For results on a different notion of universality in minor closed graph classes, also see \cite{diestelkuhn, georgakopoulos}.

As for negative results, Diestel, Halin, and Vogler \cite{diestelhalinvogler} showed that there is no weakly universal $X$-minor-free graph if $X=K_r$ for $r\geq 5$ or if $X$ contains a ray. More recent progress was made by Huynh, Mohar, Šámal, Thomassen, and Wood \cite{huynh}, who showed that every countable graph containing a copy of every countable planar graph as a subgraph must contain an infinite clique minor.
Therefore, there cannot exist a weakly universal $X$-minor-free graph for any countable non-planar $X$: Since all planar graphs are $X$-minor free, this universal graph would contain copies of all countable planar graphs. Therefore, it would contain an infinite clique and in particular $X$ as a minor.

Interestingly, all known results on the topic, including those presented in this paper, lend support to following conjecture:
\begin{conjecture}
The following are equivalent for any finite connected graph $X$:
\begin{itemize}
    \item there is a strongly universal $X$-minor-free graph,
    \item there is a weakly universal $X$-minor-free graph,
    \item $X$ is planar.
\end{itemize}
\end{conjecture}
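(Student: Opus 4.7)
The conjecture is a three-way equivalence, and I would attack it by handling the two easy implications first and then isolating the hard one. The implication \emph{strongly universal $\Rightarrow$ weakly universal} is immediate from the definitions, since every induced-subgraph embedding is in particular a subgraph embedding, and the universal graph itself is required to lie in the class in both cases. The implication \emph{weakly universal exists $\Rightarrow$ $X$ is planar} is essentially contained in the result of Huynh, Mohar, \v{S}\'amal, Thomassen, and Wood cited above: any countable graph containing every countable planar graph as a subgraph has an infinite clique minor. If $X$ is non-planar, then every planar graph is $X$-minor-free, so a weakly universal $X$-minor-free graph would contain all countable planar graphs as subgraphs, hence an infinite clique minor, hence $X$ as a minor, a contradiction. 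Thus the work reduces to the third implication.

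The nontrivial direction is \emph{$X$ planar $\Rightarrow$ there is a strongly universal $X$-minor-free graph}. My entry point would be the Robertson--Seymour excluded-grid theorem: for finite planar $X$ there is a constant $w=w(X)$ such that every $X$-minor-free graph has tree-width at most $w$. This converts the problem into the following task: construct, inside the class of countable graphs of tree-width $\le w$, a single graph $\Gamma$ that is $X$-minor-free and contains every $X$-minor-free graph as an induced subgraph.

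For the construction I would imitate the strategy of the present paper and of the Komj\'ath--Mekler--Pach cycle result: fix a canonical tree-decomposition for each $X$-minor-free graph (for instance via the Carmesin--Diestel--Hamann--Hundertmark canonical decompositions or via tangle-based decompositions) whose parts have size at most $w+1$, and amalgamate all possible such decompositions into one ``universal tree-decomposition''. Concretely, I would build a countable rooted tree whose nodes are labelled by isomorphism types of $(w+1)$-labelled torsos, with the adjacency of labels recording every compatible gluing along a shared separator of size $\le w$, and then realise the corresponding labelled tree as an actual graph $\Gamma$ by identifying vertices across matching labels. Because each bounded-width decomposition embeds into this universal one, induced-subgraph universality follows if the canonical decompositions are chosen so as to recover the induced subgraph structure of each input.

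The main obstacle, and the reason the conjecture is still open, is verifying that $\Gamma$ is itself $X$-minor-free: the amalgamation procedure can create minors that were absent from every individual $X$-minor-free input, because branch sets of a would-be $X$-minor may thread through many parts of the universal decomposition. To rule this out, one needs a structural certificate for $X$-minor-freeness that is local to small separators and stable under amalgamation, analogous to the role played in the present paper by the fact that a wheel's hub and cycle are easily tracked across $2$-separations. I expect the right route is to first establish the conjecture for restricted families of planar $X$ (outerplanar $X$, or $X$ of tree-width at most $2$) where excluded-minor structure is well understood, refining in each case the canonical decomposition so that branch sets are forced to live in single torsos, and only then to attempt the general planar case by induction on the tree-width of $X$. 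The fundamental new input likely needed is a ``universal'' structural characterisation of finite $X$-minor-free graphs for planar $X$, strong enough to survive infinite gluing; developing such a characterisation is, in my view, the real content of the conjecture.
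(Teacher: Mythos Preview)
The statement you are addressing is a \emph{conjecture} in the paper, not a theorem; the paper offers no proof of it. What the paper does provide is exactly the two implications you call easy: it observes (via the Huynh--Mohar--\v{S}\'amal--Thomassen--Wood result) that a weakly universal $X$-minor-free graph forces $X$ to be planar, and the implication from strong to weak universality is of course immediate. Your arguments for these two directions are correct and coincide with the paper's informal discussion in the introduction.

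For the remaining implication, \emph{$X$ planar $\Rightarrow$ strongly universal $X$-minor-free graph exists}, the paper makes no claim and proves nothing; it only remarks that all known evidence is consistent with the conjecture. Your proposal is therefore not a proof but a research outline, and you are candid about this. The strategy you sketch---bounded tree-width via the grid theorem, then a universal amalgam of bounded-width tree-decompositions---is natural, but you have also correctly located the genuine obstruction: closing the amalgam under all compatible gluings along separators of size $\le w$ need not preserve $X$-minor-freeness, and the paper's own method (Lemma~\ref{lem:minor_lives_in_part}) only controls this when the adhesion is strictly below the connectivity of $X$, which is at most $5$ for planar $X$ and thus far below $w(X)$ in general. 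So your proposal does not close the gap, nor does the paper; the implication remains open.
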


Finally, we provide a brief overview of the proof of Theorem \ref{thm_main_universal}, which can be divided into two parts. On the one hand, we prove the following statement for $k\in\{2,3\}$:
\begin{itemize}
\item[$(*)$] For every finite $k$-connected graph $X$ which is a minor in every $k$-connected graph containing arbitrarily long paths, there exists a strongly universal $X$-minor-free graph.
\end{itemize}

To prove $(*)$ in the case $k=2$, we decompose $X$-minor-free-graphs into their 2-connected blocks and then use the absence of long paths in the blocks to construct a universal graph.
A result of Cherlin and Tallgren \cite{cherlintallgren} on universal graphs with excluded paths will be helpful for this.
In the case $k=3$ we use a similar approach, but instead of considering 2-connected blocks, we apply a result of Richter \cite{richter}, which generalises a result of Tutte \cite{tutte} to infinite graphs, and which allows us to decompose a graph into 3-connected graphs and cycles.

On the other hand, we prove that the graphs $X$ listed in Theorem \ref{thm_main_universal} satisfy the conditions in $(*)$ for either $k=2$ or $k=3$.
In the case where $X$ is a wheel, this proof relies on results from \cite{oporowskioxleythomas} by Oporowski, Oxley, and Thomas, where they investigate unavoidable minors of 3-connected graphs.

\section{Preliminaries}\label{preliminaries}

Any graph-theoretic notation that is not defined here can be found in \cite{diestelbook}. All graphs in this paper are countable and from now on we call a strongly universal graph simply a \emph{universal graph}.

\subsection{Minors}

A graph $G'$ is a \emph{model} of a graph $G$ if there is a partition $\{B_v:v\in V(G)\}$ of $V(G')$ into non-empty connected sets of vertices, such that for all $v\neq w\in V(G)$ there is a $B_v$--$B_w$ edge in $G'$ if and only if $vw\in E(G)$. We call the sets $B_v$ \emph{branch sets} and say that $G$ is a \emph{minor} of $H$ if $H$ contains a model of $G$ as a subgraph.

\subsection{$X$-free graphs}

Let $\X$ be any class of graph. We say that a graph $G$ is \emph{$\X$-minor-free} if it does not contain a member of $\X$ as a minor and \emph{$\X$-free} if it does not contain a subgraph isomorphic to a member of $\X$. If $\X$ consists of a singe graph $X$, we also say that $G$ is $X$-minor-free or $X$-free, respectively.

\subsection{Vertex- and edge-colourings}

Let $G$ be a graph and $c,d\in\N$. A \emph{$c$-edge-colouring of $G$} is a map $f:E(G)\to\{0,\ldots,c-1\}$ and a \emph{$d$-vertex-colouring of $G$} is a map $g:V(G)\to\{0,\ldots,d-1\}$. We refer to $f(e)$ or $g(v)$ as the \emph{colour} of $e$ or $v$ in $G$, respectively. We call a graph $G$ together with a $c$-edge-colouring of $G$ a \emph{$c$-graph}, and $G$ together with both a $c$-edge-colouring and a $d$-vertex-colouring of $G$ a \emph{$(c,d)$-graph}. For a class of graphs $\G$, we denote by $\G^c$ the class of all $c$-edge-coloured versions of the graphs in $\G$ and by $\G^{(c,d)}$ the class of all $c$-edge-coloured and at the same time $d$-vertex-coloured versions of the graphs in $\G$. Isomorphisms and (induced) subgraphs in the context of $c$-graphs and $(c,d)$-graphs are required to preserve colourings.

\subsection{Tree-decompositions}

Let $G$ be a graph, $T$ a tree, and $\V=(V_t)_{t\in V(T)}$ a family of subsets of $V(G)$. The pair $(T,\V)$ is a \emph{tree-decomposition} of $G$ if
\begin{itemize}
\item $G=\bigcup_{t\in V(T)}G[V_t]$, and
\item for all $x,y,z\in V(T)$ such that $y$ lies on the $x$--$z$ path in $T$, we have $V_x\cap V_z\subseteq V_y$.
\end{itemize}
We call the sets $V_t$ \emph{parts} of $(T,\V)$ and sets of the form $V_t\cap V_u$ for adjacent $t,u\in V(T)$ \emph{adhesion sets} of $(T,\V)$. The \emph{torso} of a part $V_t$ is obtained from the graph $G[V_t]$ which $V_t$ induces in $G$ by adding edges to make all adhesion sets contained in $G[V_t]$ complete. The number $\max\{|V_t|-1:t\in V(T)\}$ is the \emph{width} of $(T,\V)$ and the maximum size of an adhesion set of $(T,\V)$ is called the \emph{adhesion} of $(T,\V)$.

\section{Lemmas on tree-decompositions}

In this section, we record two simple lemmas on tree-decompositions for later use.

\begin{lemma}\label{longpath}
For all $w\in\N$ there is a function $\ell_w:\N\to\N$ with the following property: Let $G$ be any graph with a tree-decomposition $(T,\V)$ of width less than $w$ and let $k\in\N$. Then $T$ contains a path of length $k$ if $G$ contains a path of length $\ell_w(k)$.
\end{lemma}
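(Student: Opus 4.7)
The plan is to proceed by induction on $k$. The base case $k=1$ is immediate: if $T$ has no $1$-path then $T$ consists of a single vertex, so $G = G[V_{t_0}]$ has at most $w$ vertices and therefore contains no path of length $w$; hence $\ell_w(1) := w$ works.

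For the inductive step, the main idea is to pick a vertex $t_0 \in V(T)$ and exploit the fact that $V_{t_0}$ is a separator in $G$. Let $T_1, \dots, T_m$ be the components of $T - t_0$, write $t_0^i$ for the unique neighbour of $t_0$ in $T_i$, and set $U_i := \bigcup_{t \in T_i} V_t$. Standard tree-decomposition bookkeeping yields: (i) $(T_i, (V_t)_{t \in T_i})$ is a tree-decomposition of $G[U_i]$ of width less than $w$; (ii) the sets $U_i \setminus V_{t_0}$ are pairwise non-adjacent in $G - V_{t_0}$; and (iii) every edge from $V_{t_0}$ to $U_i \setminus V_{t_0}$ has its $V_{t_0}$-endpoint inside $V_{t_0} \cap V_{t_0^i} \subseteq U_i$, where (iii) follows from the defining condition of a tree-decomposition applied to the $t_0$--$t_0^i$ path.

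Given any path $P$ in $G$, the at most $|V_{t_0}| \leq w$ vertices of $P$ lying in $V_{t_0}$ split $P$ into at most $w + 1$ subpaths, each of whose internal vertices lie in a single component of $G - V_{t_0}$. By (iii) each such subpath is entirely contained in some $G[U_i]$. Hence, if each $T_i$ has no $(k-1)$-path, the inductive hypothesis bounds each subpath's length by $\ell_w(k-1) - 1$, giving a recurrence of the form $\ell_w(k) \leq (w+1)\,\ell_w(k-1)$ (up to small additive constants).

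The remaining ingredient, and the real substance of the argument, is to choose $t_0$ so that every $T_i$ really does have no $(k-1)$-path. My plan is to take $t_0$ to be the midpoint of a longest path in $T$: since $T$ has no $k$-path its diameter is at most $k-1$, and a short calculation shows that such a midpoint has eccentricity at most $\lceil(k-1)/2\rceil$ in $T$. Consequently every vertex of $T_i$ lies within distance $\lceil(k-1)/2\rceil - 1$ of $t_0^i$, so $T_i$ has diameter at most $k-2$ and in particular no $(k-1)$-path, closing the induction. The only point that deserves a moment's care is that the midpoint construction remains valid when $T$ is infinite, but this is fine as soon as the diameter is finite, which the hypothesis guarantees.
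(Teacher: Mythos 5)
Your proof is correct, and its overall skeleton matches the paper's: induction on $k$, splitting a long path of $G$ at the bag $V_{t_0}$ of a carefully chosen ``central'' node of $T$, which yields the same kind of recurrence $\ell_w(k)\approx(w+1)\ell_w(k-1)$ plus an additive term for the pieces inside $V_{t_0}$. Where you genuinely diverge is in how the inductive hypothesis is brought to bear. The paper argues in the forward direction: it takes $t$ to be a vertex lying on \emph{every} longest path of $T$, finds one component of $P-V_t$ of length at least $\ell_w(k-1)$, obtains from the induction hypothesis a path of length $k-1$ in the corresponding component $C$ of $T-t$, and then observes that this path avoids $t$, hence is not a longest path of $T$, so $T$ must contain a path of length $k$. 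You instead work in the contrapositive: assuming $T$ has no $k$-path, you take $t_0$ to be the midpoint of a diametral path, bound its eccentricity by $\lceil (k-1)/2\rceil$, conclude that every component $T_i$ of $T-t_0$ has diameter at most $k-2$ and hence no $(k-1)$-path, and then apply the induction hypothesis uniformly to all the graphs $G[U_i]$. Your route trades the paper's slightly slick facts (that all longest paths of a tree share a vertex, and the ``the found path misses $t$, so the longest path is longer'' twist) for an explicit eccentricity computation, and your verifications (i)--(iii) make the restriction to the pieces $G[U_i]$ more careful than the paper's appeal to standard facts; both arguments need a longest path of $T$ to exist, which you address explicitly for infinite $T$ (the case of arbitrarily long paths in $T$ being trivial anyway). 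A small bonus of your version: your base case $\ell_w(1)=w$ correctly handles the situation where $T$ is a single node but $V_{t_0}$ spans an edge, which the paper's choice $\ell_w(1)=1$ glosses over.
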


\begin{proof}
Let $\ell_w(1)=1$ and $\ell_w(k):=(w+1)\ell_w(k-1)+2w$ for all $k>1$. We use induction on $k$. For $k=1$, the claim is clearly true. Therefore, suppose that $k>1$ and let $G$ be a graph with a tree-decomposition $(T,\V)$ of width less than $w$ such that $G$ contains a path $P$ of length $\ell_w(k)$. Since $T$ is a tree, there is a node $t\in V(T)$ which lies on every longest path in $T$. As $|V_t|\leq w$, there are at least $\ell_w(k)-2w=(w+1)\ell_w(k-1)$ edges in the graph $P-V_t$ and there are at most $w+1$ components of $P-V_t$.
Therefore, there is a component $P'$ of $P-V_t$ which is a path of length at least $\ell_w(k-1)$.

For every component $C$ of $T-t$, we consider the corresponding set of vertices $V_C:=\bigcup_{t\in V(C)}V_t$ in $G$. It is a standard fact about tree-decompositions that $V_t$ separates $V_C$ from $V_{C'}$ in $G$ for all components $C\neq C'$ of $T-t$. Therefore, there is a component $C$ of $T-t$ such that $V(P')\subseteq V_C$. It can be easily checked that $(C,\V')$ is a tree-decomposition of $P'$, where $V'_t:=V_t\cap V(P')$ for all $t\in V(C)$. By the induction hypothesis, $C$ contains a path $Q$ of length $k-1$. However, $Q$ cannot be a longest path in $T$ because $Q$ does not contain $t$. Hence $T$ contains a path of length $k$.
\end{proof}

\begin{lemma}\label{lem:minor_lives_in_part}
Let $G,H$ be graphs and $k\in\N$ such that $H$ is $k$-connected and $H$ is a minor of $G$. Furthermore, let $(T,\V)$ be a tree-decomposition of $G$ of adhesion $<k$ such that all adhesion sets are complete in $G$. Then there is a node $t\in V(T)$ such that $H$ is a minor of $G[V_t]$.
\end{lemma}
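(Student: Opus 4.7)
The plan is to orient each edge of $T$ according to where the branch sets of a given model of $H$ live, and then to find a \emph{sink} node $t \in V(T)$ whose part $V_t$ already contains a model of $H$.

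Fix a model $(B_v)_{v \in V(H)}$ of $H$ in $G$. For each edge $e = \{s,t\}$ of $T$ let $U_s$ and $U_t$ denote the unions of parts indexed by the two components of $T-e$, so that $U_s \cap U_t = S_e$ with $|S_e| < k$, and $S_e$ separates $U_s \setminus S_e$ from $U_t \setminus S_e$ in $G$. By connectivity, each branch set $B_v$ lies entirely in $U_s \setminus S_e$ (write $v \in I_e^s$), entirely in $U_t \setminus S_e$ ($v \in I_e^t$), or meets $S_e$ ($v \in C_e$). Since $C_e$ separates $I_e^s$ from $I_e^t$ in $H$ and $|C_e| \le |S_e| < k$, the $k$-connectivity of $H$ (together with $|V(H)| > k$) forces exactly one of $I_e^s, I_e^t$ to be empty; orient $e$ toward the side whose $I$-set is nonempty.

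I claim this orientation has a sink. For finite $T$ this is immediate, since any maximal forward walk in a tree must terminate. For infinite $T$, an infinite forward walk $t_0, t_1, \ldots$ would yield, for each $i$, a branch set $B_{v_i}$ lying entirely on the (strictly shrinking) tail side of $e_i = \{t_i, t_{i+1}\}$; pigeonhole over the finite set $V(H)$ picks out some $v$ with $v = v_i$ for infinitely many $i$, forcing the subtree of $T$ meeting $B_v$ to lie in the intersection of all the nested tail components, which is empty --- contradicting $B_v \ne \emptyset$.

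At a sink $t$ set $B'_v := B_v \cap V_t$ for each $v \in V(H)$. Each $B'_v$ is nonempty (otherwise $B_v$ would lie entirely on some neighbor's side of $t$, violating the sink property) and connected in $G[V_t]$: any detour of $B_v$ outside $V_t$ enters and exits through an adhesion set $S_e$, and since by hypothesis $S_e$ is a clique in $G$, any such detour can be shortcut by a single edge inside $B_v \cap S_e \subseteq B'_v$. For every $vw \in E(H)$ the required adjacency between $B'_v$ and $B'_w$ is either provided by the original $B_v$--$B_w$ edge of $G$ (if it lies in $V_t$) or by a clique edge inside $S_e$ joining $B_v \cap S_e$ to $B_w \cap S_e$; both intersections are nonempty because each involved branch set must touch $S_e$ on its way between $V_t$ and the exterior. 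Thus $(B'_v)_{v \in V(H)}$ is a model of $H$ in $G[V_t]$. The main obstacle is the sink-existence argument for infinite $T$, which leans on the finiteness of $V(H)$; the remainder of the argument is comfortable bookkeeping made possible by the completeness of the adhesion sets.
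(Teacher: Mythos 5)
Your overall strategy is essentially the paper's: orient each edge of $T$ by using the small, complete adhesion sets together with the $k$-connectivity of $H$, extract a sink node $t$, and then truncate the branch sets to $V_t$, using completeness of the adhesion sets to restore connectivity inside each $B_v\cap V_t$ and the required adjacencies. The orientation step is correct (your ``exactly one of $I_e^s,I_e^t$ is empty'' follows since $|V(H)|>k>|S_e|$ and disjoint branch sets cannot all meet $S_e$), and your final verification is fine --- indeed you make explicit the connectivity of the truncated branch sets, which the paper only treats implicitly.

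There is, however, a genuine gap in the step excluding an infinite directed ray: your pigeonhole ``over the finite set $V(H)$'' uses that $H$ is finite, a hypothesis the lemma does not make --- graphs in this paper are countable and may be infinite, and the paper's proof covers that case. If $V(H)$ is infinite, the witnesses $v_i$ may be pairwise distinct, so no single branch set need lie inside infinitely many of the nested tails, and your contradiction disappears. The paper's argument avoids this: along a directed ray $t_1t_2t_3\ldots$, every branch set $B_x$ meets, for each $i$, the union of the parts indexed by the component of $T-t_it_{i+1}$ containing $t_{i+1}$ (this is what the orientation says), and since $B_x$ is connected it must meet $V_{t_i}$ for all sufficiently large $i$; choosing any $k$ distinct vertices of $H$ (they exist since $|V(H)|>k$) and $m$ large enough, their $k$ disjoint branch sets would all meet the adhesion set $V_{t_m}\cap V_{t_{m+1}}$, which has fewer than $k$ vertices --- a contradiction. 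With your pigeonhole replaced by this argument (only finitely many, namely $k$, vertices of $H$ are ever needed), your proof works in the stated generality; as written it establishes the lemma only for finite $H$, which does suffice for the applications in this paper, where $H$ is the finite excluded graph $X$.
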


\begin{proof}
For every edge $tu$ of $T$, let $C_{(t,u)}$ be the component of $T-tu$ containing $u$ and write $U_{(t,u)}:=\bigcup_{t\in C_{(t,u)}} V_t$ for the corresponding subgraph of $G$.

Fix a model $H'$ of $H$ in $G$ with branch sets $B_x$ for $x\in V(H)$. We begin by showing that for every edge $t_1t_2$ of $T$, there exists a number $i\in\{1,2\}$ such that all branch sets of $H'$ meet $U_{(t_{1-i},t_i)}=:U_i$.
Suppose that there exists no such $i$; then there are vertices $x_1,x_2\in V(H)$ such that $B_{x_1}\subseteq U_1\setminus U_2$ and $B_{x_2}\subseteq U_2\setminus U_1$. Since $H$ is $k$-connected, there are $k$ internally disjoint $x_1$--$x_2$ paths in $H$ and hence $k$ internally disjoint $B_{x_1}$--$B_{x_2}$ paths in $H'\subseteq G$. However, $V_{t_1}\cap V_{t_2}$ contains less than $k$ vertices and separates $U_1\setminus U_2$ from $U_2\setminus U_1$ in $G$, a contradiction.

Now we orient every edge $tu$ of $T$ towards an endvertex $u$ with the property that all branch sets of $H'$ meet $U_{(t,u)}$. We will show that there is no directed ray $t_1t_2t_3\ldots$ in $T$ (we call a path or ray $t_1t_2t_3\ldots$ directed if it orients each edge $t_it_{i+1}$ towards $t_{i+1}$).
Then there exists a maximal directed path in $T$ and its last vertex $t$ has the property that all edges of $T$ incident with $t$ are directed towards $t$. From that it is easy to conclude that all branch sets of $H'$ meet $V_t$. Thus $H$ is a minor of $G[V_t]$ with branch sets $B_x\cap V_t$ for $x\in V(H)$: Indeed, there is an edge in $G[V_t]$ between $B_x\cap V_t$ and $B_y\cap V_t$ whenever there is an edge between $B_x$ and $B_y$ because the adhesion sets of $(T,\V)$ are complete.

It is left to show that $T$ contains no directed ray. Suppose for a contradiction that $R=t_1t_2t_3\ldots$ is a directed ray in $T$.

First, we show that
\begin{itemize}
\item[$(*)$] for every vertex $x\in V(H)$ there is an integer $f(x)$ such that for all $i\geq f(x)$, the part $V_{t_i}$ meets $B_x$.
\end{itemize}

Since the edge $t_1t_2$ is oriented towards $t_2$, there is a node $t\in C_{(t_1,t_2)}$ with $V_t\cap B_x\neq\emptyset$. Let $P$ be the (undirected) $t$--$R$ path in $T$ and let $f(x)\in\N$ such that $t_{f(x)}$ is the endvertex of $P$ in $R$. Next, consider any $i>f(x)$ and let $u\in C_{(t_{i-1},t_i)}$ with $V_u\cap B_x\neq\emptyset$.
Then the $t$--$u$ path in $T$ contains both $t_{f(x)}$ and $t_i$.
As $B_x$ is connected and meets $V_t$ and $V_u$, it therefore also meets $V_{t_{f(x)}}$ and $V_{t_i}$. This completes the proof of $(*)$.

Since $H$ is $k$-connected, $H$ contains $k$ pairwise distinct vertices $x_1,\dots,x_k$. By $(*)$ we have $V_{t_i}\cap B_{x_j}\neq\emptyset$ for all $i\geq m:=\max\{f(x_\ell):\ell\leq k\}$ and $j\leq k$. Thus $B_{x_j}$ meets the adhesion set $V_{t_m}\cap V_{t_{m+1}}$ for all $j\leq k$, which is a contradiction since the adhesion of $(T,\V)$ is $<k$. Hence $T$ does not contain a directed ray.
\end{proof}

\section{Lemmas on universal graphs without long paths}

In this section, we prove the existence of edge-coloured universal graphs with forbidden paths and finitely many other forbidden substructures. These universal graphs will serve as bricks for constructing universal graphs in later sections.
In the following Lemma, we extend a result of Cherlin and Tallgren \cite{cherlintallgren} to edge-coloured graphs; we follow their original proof. For all $n\in\N$, let $P_n$ denote a fixed path with $n$ edges.

\begin{lemma}\label{pn-free}
Let $c\in\N$ and let $\X$ be a finite set of finite connected $c$-graphs. Suppose that for some $n\in\N$, every $c$-edge-coloured version of $P_n$ is contained in $\X$, or in short, $\{P_n\}^c\subseteq\X$. Then there exists a universal $\X$-free $c$-graph.
\end{lemma}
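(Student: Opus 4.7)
My plan is to reduce to the connected case and then build a universal connected $\X$-free $c$-graph inductively by the radius around a distinguished root. Since every graph in $\X$ is connected, a $c$-graph is $\X$-free if and only if each of its components is, so it suffices to construct a single $\X$-free $c$-graph $U^*$ containing every countable connected $\X$-free $c$-graph as an induced $c$-subgraph: the disjoint union of $\aleph_0$ copies of $U^*$ together with $\aleph_0$ isolated vertices is then universal for the whole class. The hypothesis $\{P_n\}^c\subseteq\X$ says that no $c$-edge-colouring of $P_n$ is allowed, so the underlying uncoloured graph of any $\X$-free $c$-graph contains no $P_n$ subgraph, and hence every connected $\X$-free $c$-graph has diameter strictly less than $n$.

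To build $U^*$, I would proceed recursively on $k=0,1,\ldots,n-1$, constructing a rooted $c$-graph $(U_k,r_k)$ that is $\X$-free and universal among rooted connected $\X$-free $c$-graphs $(G,v)$ of radius at most $k$ around $v$, in the sense that every such $(G,v)$ admits an induced rooted embedding into $(U_k,r_k)$ sending $v$ to $r_k$. The base case $k=0$ is a single vertex. For the inductive step, I would form $(U_{k+1},r_{k+1})$ by introducing a fresh root $r_{k+1}$, attaching at it countably many copies of $U_k$ for each possible edge colour of the joining edge, and then enriching the construction with further edges between roots of different copies (and between their descendants) so that every finite adjacency pattern among the BFS layers that can legally occur in some $\X$-free $c$-graph is realised, in countably many copies, inside $U_{k+1}$. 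Setting $U^*:=U_{n-1}$ then gives the desired graph.

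The main obstacle is the inductive step, where one needs to simultaneously ensure that $U_{k+1}$ remains $\X$-free and that the embedding of an arbitrary $(G,v)$ is \emph{induced} rather than merely a subgraph embedding; this forces us to realise non-edges as well as edges between neighbors and distant descendants of the root. I expect to address this by a back-and-forth/enumeration argument: enumerate the finite $\X$-free rooted configurations of interest and process them one by one, adding vertices and edges to a partial $U_{k+1}$ while checking at each stage that no member of $\X$ is completed. The finiteness of $\X$ and the bounded size of each member of $\X$ make this check a finite computation, and a newly completed subgraph can always be avoided by either rerouting the extension or by attaching it in a fresh copy. Once $U_{k+1}$ is assembled, universality follows layer by layer: the first BFS layer of $G$ around $v$ is matched into the neighbors of $r_{k+1}$ in the correct colour-and-adjacency pattern, and then the ball of radius $k$ around each such neighbor is embedded, by the inductive universality, into the attached copy of $U_k$.
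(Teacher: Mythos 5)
There is a genuine gap: the inductive step that does all the work in your plan is only asserted, not proved, and as sketched it does not go through. First, a structural problem with the radius induction itself: if $G$ has radius at most $k+1$ around $v$, a component $C$ of $G-v$ may meet $N(v)$ in many (even infinitely many) vertices, and every vertex of $C$ is only guaranteed to be within distance $k$ of the \emph{set} $N(v)\cap C$, not of any single vertex of $C$. So $C$ need not have radius at most $k$ around any root, and your inductive hypothesis (rooted universality of $U_k$) does not apply to it; in particular, attaching copies of $U_k$ to the new root by single edges cannot accommodate components that attach to $v$ at several vertices. Second, and more fundamentally, the claim that one can realise ``every finite adjacency pattern that can legally occur in some $\X$-free $c$-graph'' inside one graph $U_{k+1}$, keeping $U_{k+1}$ itself $\X$-free and every required embedding induced, is exactly the statement that needs a proof. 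Configurations that are individually $\X$-free need not be simultaneously realisable in a single $\X$-free graph, and since the configurations you must realise involve the shared root and previously built layers, your escape hatch of ``rerouting or attaching in a fresh copy'' changes the configuration being realised and is not justified; no argument is given for why the back-and-forth never gets stuck.

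For comparison, the paper's proof avoids precisely these difficulties by inducting on $n$ rather than on radius: it removes a \emph{longest path} $P$ of length $n-1$ (not a single root), encodes each remaining vertex's attachment to $P$ (neighbours on $P$ and edge colours) in one of finitely many new vertex colours, uses the fact that any two longest paths in a connected graph intersect to conclude that every component of $G-V(P)$ is $P_{n-1}$-free (so the induction parameter genuinely drops), tailors the forbidden family $\Y_C$ for each component so that reassembly cannot create a member of $\X$, and proves a finiteness-of-isomorphism-types statement that controls how many copies of each building block are needed. Your sketch has no counterpart to any of these mechanisms, so as it stands it is a plausible-sounding programme rather than a proof.
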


\begin{proof}
We show by induction on $n$ that the following stronger statement holds for all $c,d,n\in\N$:
\begin{itemize}
\item[$(\dagger)$]
Let $\X$ be any finite set of finite connected $(c,d)$-graphs with $\{P_n\}^{(c,d)}\subseteq\X$ and denote by $\G$ the class of all $\X$-free $(c,d)$-graphs.
Then there is a subclass $\Gamma(\X,c,d)\subseteq\G$ containing $(c,d)$-graphs of only finitely many isomorphism types, such that every connected $(c,d)$-graph $G\in\G$ is an induced subgraph of some $(c,d)$-graph $\Gamma_G \in \Gamma(\X,c,d)$.
\end{itemize}
Then we pick one representative of each isomorphism type in $\Gamma(\X,c,d)$ and take the disjoint union of $\aleph_0$ copies of each representative. This $(c,d)$-graph is clearly universal in $\G$, which completes the proof of the Lemma when we set $d=1$.
Therefore, it suffices to prove $(\dagger)$.

If $n=1$, we set $\Gamma(\X,c,d) := \{\emptyset\}$. Now suppose that $n > 1$. For every connected $(c,d)$-graph $G\in\G$, we will define a $(c,d)$-graph $\Gamma_G\in\G$ containing $G$ as an induced subgraph. In the end, we will see that $\Gamma(\X,c,d):=\{\Gamma_G:G\in\G\}$ only contains finitely many $(c,d)$-graphs up to isomorphism, which proves $(\dagger)$. If $G$ does not contain a path of length $n-1$, then there is $\Gamma_G\in\Gamma(\{P_{n-1}\}^{(c,d)}\cup\X,c,d)\subseteq\G$ containing $G$ as an induced subgraph by induction. Otherwise, let $P$ be a path of length $n-1$ in $G$; we consider $P$ as a $(c,d)$-graph with colourings inherited from $G$.

For every $(c,d)$-graph $H$ we define $t(H)$, which is the graph $H-V(P)$ \footnote{$V(P)$ need not be a subset of $V(H)$.} with edge-colouring induced by $H$ but with a new vertex-colouring: For every vertex $v$ of $H-V(P)$, we consider the following information:
\begin{itemize}
\item the colour of $v$ in $H$,
\item the set of neighbours of $v$ on $P$,
\item the colour of the $v$--$p$ edge for each neighbour $p$ of $v$ on $P$.
\end{itemize}
Then there is a finite number $d'$ (independent of $v$ and $H$) such that there are exactly $d'$ combinations in which these items can occur for each $v$.
Thus we are able to encode this information in a $d'$-vertex-colouring of $t(H)$. Note that for every $(c,d')$-graph $H'$ with $V(H')\cap V(P)=\emptyset$, there is a unique $(c,d)$-graph $H$ containing $P$ as a subgraph such that $t(H)=H'$. We write $\overset{\circ}{t}(H'):=H$.

Now we begin with the construction of $\Gamma_G$. Let $\C$ be the set of components of $t(G)$ and consider a component $C\in\C$. Set $k:=\max\{|X|:X\in\X\}$. We define $\Y_C$ to be the set of all $(c,d')$-graphs $Y$ whose vertex set is a subset of $\{1,\ldots,k\}$ such that $C$ is $Y$-free. Next, it is easy to see that every two longest paths in a connected graph share a vertex. Therefore, since $C$ is disjoint from $P$, which is a longest path in $G$, it follows that $\{P_{n-1}\}^{(c,d')}\subseteq\Y_C$. Hence $(\dagger)$ gives us a finite set $\Gamma(\Y_C,c,d')$ of $(c,d')$-graphs by induction and we choose a $(c,d')$-graph $C^*\in\Gamma(\Y_C,c,d')$ such that $C$ is an induced subgraph of $C^*$.

Let $\D$ be a set containing $\aleph_0$ disjoint copies of each $(c,d')$-graph $D$ which is contained at least $k$ times in the family $(C^*)_{C\in\C}$. We define $\overline{t(G)}$ to be the disjoint union $\bigsqcup_{C\in\C}C^*\sqcup\bigcup\D$. We do not change the colouring of any graph in this union so that $\overline{t(G)}$ is again a $(c,d')$-graph. Without loss of generality, we assume that $V(\overline{t(G)})\cap V(P)=\emptyset$ so that we can define $\Gamma_G:=\overset{\circ}{t}(\overline{t(G)})$.

Then $G$ is an induced subgraph of $\Gamma_G$: Indeed, $t(G)$ is an induced subgraph of $\overline{t(G)}$ and thus $G=\overset{\circ}{t}(t(G))$ is an induced subgraph of $\Gamma_G=\overset{\circ}{t}(\overline{t(G)})$.
Next, we show that $\Gamma_G\in\G$. Clearly, $\Gamma_G$ is countable. Now suppose for a contradiction that a copy $X$ of a $(c,d)$-graph from $\X$ is a subgraph of $\Gamma_G$. Then also $t(X)$ is a subgraph of $t(\Gamma_G)=\overline{t(G)}$. Since $X$ and therefore $t(X)$ has at most $k$ vertices, $t(X)$ is also a subgraph of $t(G)$. However, this implies that $X$ is a subgraph of $G$, contradicting that $G\in\G$.

It is left to show that there are only $(c,d)$-graphs of finitely many isomorphism types contained in $\{\Gamma_G:G\in\G\}$. In the case that $G$ does not contain a path of length $n-1$, there are only finitely many possible isomorphism types for $\Gamma_G$ by assumption on $\Gamma(\{P_{n-1}\}^{(c,d)}\cup\X,c,d)$. If $G$ does contain a path $P$ of length $n-1$, the same is true because there are only finitely many possible colourings of $P$, only finitely many possible choices for the map $t$, only finitely many possible sets $\Y_C$, only graphs of finitely many isomorphism types in $\Gamma(\Y_C,c,d')$, and only finitely many multiplicities (namely $0,1,\ldots,k-1,\aleph_0)$ in which the $(c,d')$-graphs from each set $\Gamma(\Y_C,c,d')$ can occur in $\overline{t(G)}$.
\end{proof}

We now derive from Lemma \ref{pn-free} that there is a universal graph with any excluded finite connected minor if we additionally exclude a path:

\begin{corollary}\label{forbidden-minor}
Let $c,n\in\N$ and let $X$ be a finite connected $c$-graph. Then there exists a universal $\{X \cup P_n\}^c$-minor-free $c$-graph.
\end{corollary}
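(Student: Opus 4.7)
Plan: I would apply Lemma~\ref{pn-free} with a suitably chosen finite family $\X$ of connected $c$-graphs. Let $\M$ be the finite set of all connected $c$-graphs $Y$ on at most $|V(X)|+n+1$ vertices that contain some $c$-edge-colouring of $X \cup P_n$ as a minor, and put $\X := \{P_n\}^c \cup \M$. Then $\X$ is a finite set of finite connected $c$-graphs with $\{P_n\}^c \subseteq \X$, so Lemma~\ref{pn-free} provides a universal $\X$-free $c$-graph $U$. This $U$ lies in the target class of $\{X \cup P_n\}^c$-minor-free $c$-graphs: being $\{P_n\}^c$-subgraph-free, it is also $\{P_n\}^c$-minor-free, and consequently $\{X \cup P_n\}^c$-minor-free, since any such minor would yield a disjoint $P_n$-subgraph.

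The main work is to show that every $\{X \cup P_n\}^c$-minor-free $c$-graph $G$ is isomorphic to an induced subgraph of $U$. This is subtle because $G$ may itself contain $P_n$-subgraphs (for instance when $G = P_{\infty}$ and $X$ contains a cycle), so $G$ need not be $\X$-free. To bridge the gap I would combine the strengthening $(\dagger)$ from the proof of Lemma~\ref{pn-free} applied inductively with the following structural observation: any $G \in \G$ admitting an $X$-minor has a minimum $X$-model whose vertex set $S$ satisfies that $G - S$ is $\{P_n\}^c$-subgraph-free, since otherwise the disjoint $X$-model together with a $P_n$-subgraph in $G - S$ would realise the forbidden minor. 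Encoding the attachment pattern of each vertex of $G - S$ to $S$ via a $d$-vertex-colouring embeds $G$ into the $(c,d)$-graph framework, where $(\dagger)$ supplies the needed universal family.

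The hardest part, I expect, will be to execute this encoding so that $G$ itself can be recovered as an induced subgraph of some $\{X \cup P_n\}^c$-minor-free $c$-graph, and to handle separately the case where $G \in \G$ has no $X$-minor at all. In the latter case $G$ is an arbitrary $X$-minor-free $c$-graph, and one would likely need to combine the above construction with a universal graph for $X$-minor-free, $\{P_n\}^c$-subgraph-free $c$-graphs (itself obtained from Lemma~\ref{pn-free} with a further enlarged $\X$) via a suitable disjoint union and gluing construction.
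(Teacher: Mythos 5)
There is a genuine gap, and it starts with the reading of the statement. As the paper's later applications make explicit (in Theorems \ref{forbiddencycle} and \ref{forbiddenwheel} the corollary is invoked to produce a universal $\{X,P_n\}$-minor-free graph $\Delta_n$, i.e.\ a graph in the class excluding \emph{both} $X$ and $P_n$ as minors), the class in Corollary \ref{forbidden-minor} consists of $c$-graphs having neither $X$ nor $P_n$ as a minor. You instead treat the forbidden minor as the single disconnected graph $X\sqcup P_n$. Under your reading the class is much larger: it contains every $X$-minor-free graph with no $X$-model disjoint from a long path, in particular every ray (and graphs with arbitrarily long paths generally) whenever $X$ is not a path. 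Consequently your candidate $U$, which is $\{P_n\}^c$-subgraph-free by construction, cannot contain such graphs even as subgraphs, so it is not universal for the class you are aiming at; in fact, a universal graph for your class would in particular have to contain all $X$-minor-free graphs, which is essentially the main theorem of the paper and provably beyond the reach of Lemma \ref{pn-free} alone. Your second and third paragraphs acknowledge this but only gesture at a repair (a minimum $X$-model $S$ with $G-S$ being $P_n$-free, an attachment encoding, and an unspecified ``disjoint union and gluing''); none of this is carried out, the case of graphs with no $X$-minor but arbitrarily long paths is left open, and it is not shown that the resulting object itself avoids the forbidden minor. As written, the proposal does not prove either version of the statement.

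Under the intended reading the argument is much more direct, but your finite family is still not the right one. The paper's proof shows that a graph has neither $X$ nor $P_n$ as a minor if and only if it has no subgraph isomorphic to $P_n$ or to a member of a class $\mathcal M$ of ``small'' models of $X$: since absence of a $P_n$-minor means absence of a $P_n$-subgraph, any model of $X$ can be pruned so that each branch set induces a tree with fewer than $|X|$ leaves (hence maximum degree below $|X|$) and no path of length $n$. Such trees have bounded size, so $\mathcal M\cup\{P_n\}$ contains only finitely many connected isomorphism types and Lemma \ref{pn-free} applies to $(\mathcal M\cup\{P_n\})^c$. Note that the bound on these pruned models is roughly $|X|\cdot(|X|-1)^{n}$ vertices, not $|V(X)|+n+1$ as in your $\mathcal M$, so even after correcting the interpretation your family would miss relevant models; and one still has to verify the equivalence of the subgraph-free and minor-free classes in both directions, which is the actual content of the paper's proof.
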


\begin{proof}
Let $\M$ be the class of all models $X'$ of $X$ not containing a path of length $n$ such that all branch sets induce finite trees in $X'$ with less than $|X|$ leafs.
Further, let $\G$ be the class of all $\{X \cup P_n\}$-minor-free graphs and $\H$ the class of all $\M \cup \{P_n\}$-free graphs. Our first step is to show that $\G=\H$.

Clearly, $\G\subseteq\H$. Conversely, suppose that $G$ is a graph that is not contained in $\G$. If $G$ contains a path of length $n$ as a minor, then it is also contains a path of length $n$ as a subgraph and consequently $G$ cannot be contained in $\H$.
Now suppose that $P_n$ is not a minor of $G$ but $X$ is, and let $X'$ be a model of $X$ in $G$.
For any $y\in V(X)$, let $B_y$ denote the branch set in $X'$ corresponding to $y$.
Now fix for every edge $yz\in E(X)$ a $B_y$--$B_z$ edge $e_{yz}$ in $X'$.
Since $X'[B_y]$ is connected, there is a tree $T_y$ in $X'[B_y]$ such that all leaves of $T_y$ are endvertices of edges $e_{yz}$ for $z\in N_X(y)$. Now we replace every subgraph of the form $X'[B_y]$ of $X'$ with the tree $T_y$ to obtain a subgraph of $X'$ which lies in $\M$. It follow that $G$ is not contained in $\H$, which completes the proof that $\H\subseteq\G$.

It is left to show that there are only graphs of finitely many isomorphism types contained in $\M$ and thus in $(\M \cup \{P_n\})^c$. Then by Lemma \ref{pn-free}, there is a universal $(\M \cup \{P_n\})^c$-free $c$-graph, i.e. a universal $c$-graph in $\H^c$. Since $\G = \H$, it follows that this graph is also universal in $\G^c$, i.e. the class of all $\{X \cup P_n\}^c$-minor-free graphs, completing the proof.

To see that $\M$ contains graphs of only finitely many isomorphism types, it suffices to show that the graphs in $\M$ have finitely bounded size. To accomplish that, it suffices, in turn, to show that the branch sets of all models of $X$ contained in $\M$ have finitely bounded size. So let $X'$ be any such model and let $T$ denote a graph induced by one of the branch sets. The vertices of $T$ have less than $|X|$ neighbours since $T$ is a finite tree with less than $|X|$ leaves. As $T$ has maximum degree less than $|X|$ and does not contain a path of length $n$, we obtain an upper bound on the size of $T$, as desired.
\end{proof}

\section{Unavoidable minors of 2-connected graphs containing long paths}

For all integers $n,m\geq 3$, let $C_n$ denote a fixed cycle of length $n$ and $C_{n,m}$ a fixed graph consisting of a cycle of length $m$ and a cycle of length $n$ which intersect in exactly one edge. In this section, we recall/prove that every 2-connected graph containing arbitrarily long paths contains $C_n$ and $C_{n,m}$ as minors. These results will be used in the next section for showing the existence of universal $C_n$-minor-free and $C_{n,m}$-minor-free graphs.

\begin{lemma}[\cite{diestel} Chapter 1, Exercise 3]\label{cycle}
Every 2-connected graph $G$ containing a path of length $n^2$ also contains a cycle of length at least $n$ for all $n\geq 3$.
\end{lemma}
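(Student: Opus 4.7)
The plan is to partition the given path $P = v_0 v_1 \dots v_{n^2}$ into $n$ consecutive subpaths $S_1, \dots, S_n$ of length $n$, with $S_i$ running from $L_i := v_{(i-1)n}$ to $R_i := v_{in}$ and having interior $V(S_i)^{\circ} := \{v_{(i-1)n+1}, \dots, v_{in-1}\}$ of size $n-1$. The argument then splits into two cases, depending on whether some $V(S_i)^{\circ}$ fails to separate $L_i$ from $R_i$ in $G$.

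In the first case, such an $i$ exists, so pick a path $B$ from $L_i$ to $R_i$ in $G - V(S_i)^{\circ}$. Since $B$ is simple with endpoints $L_i$ and $R_i$, its interior avoids $\{L_i, R_i\} \cup V(S_i)^{\circ} = V(S_i)$; hence $B$ is internally disjoint from $S_i$, and $B \cup S_i$ is a simple cycle of length $|B| + n \ge n + 1$.

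In the remaining case, each $V(S_i)^{\circ}$ separates $L_i$ from $R_i$ in $G$, and therefore also separates $v_0$ from $v_{n^2}$: indeed $v_0$ is connected to $L_i$ in $G - V(S_i)^{\circ}$ via the subpath $v_0 v_1 \dots v_{(i-1)n}$, and $v_{n^2}$ is connected to $R_i$ via $v_{in} v_{in+1} \dots v_{n^2}$, both of whose vertex sets are disjoint from $V(S_i)^{\circ}$. Since $G$ is 2-connected, Menger's theorem supplies two internally disjoint $v_0$-$v_{n^2}$ paths $Q_1$ and $Q_2$ in $G$. Each $Q_j$ must cross every one of the pairwise disjoint separators $V(S_1)^{\circ}, \dots, V(S_n)^{\circ}$ and hence contains at least $n$ distinct internal vertices; together with the two endpoints this gives $|Q_j| \ge n + 1$, and so $Q_1 \cup Q_2$ is a cycle of length at least $2n + 2 \ge n$.

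There is no serious obstacle beyond spotting the right case split; once the disjointness of the separators $V(S_i)^{\circ}$ is observed, both cases follow immediately from 2-connectivity applied, respectively, to a single subpath $S_i$ or globally to the pair $v_0, v_{n^2}$.
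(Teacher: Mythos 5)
Your proof is correct. Note that the paper does not actually prove this lemma --- it is only cited as an exercise from Diestel's book --- so there is no in-paper argument to compare against; your write-up supplies a valid self-contained proof. Both cases check out: in the first, the path $B$ in $G-V(S_i)^{\circ}$ is internally disjoint from $S_i$ and cannot coincide with an edge of $S_i$ (as $S_i$ has length $n\geq 3$), so $B\cup S_i$ is indeed a cycle of length at least $n+1$; in the second, the interiors $V(S_i)^{\circ}$ are pairwise disjoint and each separates $v_0$ from $v_{n^2}$, so each of the two internally disjoint $v_0$--$v_{n^2}$ paths guaranteed by 2-connectedness picks up at least $n$ internal vertices, and their union is a cycle of length at least $2n+2$ (the degenerate case of a shared edge $v_0v_{n^2}$ is excluded since both paths have length at least $n+1$). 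For comparison, the more standard solution to the cited exercise takes a cycle $C$ through the two endpoints of the path $P$, assumes $|C|<n$, and then finds a segment of $P$ between consecutive $C$-vertices of length at least $n$, which together with an arc of $C$ yields the long cycle; your case split on whether some $V(S_i)^{\circ}$ separates $L_i$ from $R_i$ is a different but equally elementary route, and it even gives the slightly better bound $\min(n+1,\,2n+2)$.
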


\begin{lemma}\label{2-con}
Let $n,m\geq 3$ and let $X$ be either the graph $C_n$ or the graph $C_{n,m}$. Then $X$ is a minor of $G$ for any 2-connected graph $G$ containing arbitrarily long paths.
\end{lemma}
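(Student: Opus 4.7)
The plan is to split on $X$. For $X = C_n$ the result is immediate from Lemma~\ref{cycle}: a path of length $\geq n^2$ in $G$ gives a cycle of length $\geq n$, which contracts to $C_n$.

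For $X = C_{n,m}$, assume WLOG $n \leq m$, set $L = 2m$, and apply Lemma~\ref{cycle} to obtain a cycle $C \subseteq G$ with $|V(C)| \geq L$. The goal is to find an \emph{ear} of $C$ in $G$ (a path $E$ with both endpoints on $V(C)$ and interior disjoint from $V(C)$) of length at least $n-1$. Given such an $E$ with endpoints $u, v \in V(C)$, the two arcs of $C$ between $u$ and $v$ together with $E$ form three internally disjoint $u$--$v$ paths; since the longer arc has length $\geq |V(C)|/2 \geq m-1$ and $|E| \geq n-1$, contracting the shorter arc to a single edge exhibits $C_{n,m}$ as a minor.

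The main step is producing an ear of length $\geq n-1$. I would argue by contradiction: suppose every ear of $C$ has length $\leq n-2$. For any simple path $\pi$ in $G$ with both endpoints in $V(C)$, decompose $\pi$ into alternating maximal subpaths inside $V(C)$ and ``excursions'' with interior outside $V(C)$. Each excursion together with its two $V(C)$-endpoints is itself an ear of $C$, hence of length $\leq n-2$, contributing at most $n-3$ interior vertices; consecutive excursions are separated by nonempty disjoint $V(C)$-runs, so the number of excursions is at most $|V(C)|+1$, and thus $|\pi| \leq |V(C)| + (|V(C)|+1)(n-3)$. For an arbitrary simple path $\pi$ one must further bound the two ``tails'' of $\pi$ lying in components of $G - V(C)$: contracting the vertex set of such a tail to a single vertex and applying Menger in the (still 2-connected) contracted multigraph yields two internally disjoint paths to $V(C)$ which extend the tail to an ear of $C$ of at least the same length, forcing the tails to be short as well. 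Hence every simple path in $G$ has bounded length, contradicting the hypothesis of arbitrarily long paths.

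The main technical obstacle is the tail bound: while the middle of $\pi$ is a direct counting argument, bounding a simple path entirely inside a component of $G - V(C)$ requires a contraction--Menger step to extract a long ear of $C$ from a long internal path, which relies crucially on 2-connectedness.
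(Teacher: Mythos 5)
Your reduction in the ear case is sound up to one step. Given an ear $E$ of $C$ with $|E|\ge n-1$ and endpoints $u,v$, the theta graph $C\cup E$ has three internally disjoint $u$--$v$ paths of lengths $\ell_1\ge 1$, $\ell_2\ge m$ and $|E|\ge n-1$ (where $\ell_1+\ell_2=|V(C)|\ge 2m$), which is indeed a subdivision of $C_{n,m}$ and hence exhibits it as a minor. The bound on the ``middle'' of $\pi$ between two $V(C)$-vertices is also correct.

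The tail bound, however, has a genuine gap, and it is exactly at the step you flag as the ``main technical obstacle.'' Contract the tail $\tau$ to $t^{*}$ and take a $2$-fan from $t^{*}$ to $V(C)$; uncontracting, the two paths attach at vertices $a_{1},a_{2}\in V(\tau)$, and the ear you obtain is $P_{1}'\cup\tau[a_{1},a_{2}]\cup P_{2}'$. Its length is at least $|\tau[a_{1},a_{2}]|$, \emph{not} at least $|\tau|$: Menger gives no control over where $a_{1},a_{2}$ land on $\tau$, and they can be adjacent (or equal), so the ear may contain essentially none of the tail. Thus ``which extend the tail to an ear of $C$ of at least the same length'' is false as stated, and the argument does not bound a long path lying inside a component of $G-V(C)$. (As a side remark, the parenthetical ``still 2-connected contracted multigraph'' is also not quite right---contracting a connected set in a $2$-connected graph need not preserve $2$-connectedness---though the $2$-fan does exist anyway, since a single vertex cannot separate $V(\tau)$ from $V(C)$ in $G$.)

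The cleanest repair eliminates tails entirely: apply Lemma~\ref{cycle} a second time to replace the long path $\pi$ by a long \emph{cycle} $C'$. If $C'$ meets $C$ in at least two vertices, $C'$ decomposes into at most $|V(C)|$ ears of $C$, so a very long $C'$ forces a long ear; if $C'$ meets $C$ in at most one vertex, $2$-connectedness gives two vertex-disjoint $C$--$C'$ paths (one may be trivial), and routing through a long arc of $C'$ again yields a long ear. This is precisely the paper's proof: it takes two cycles $D^{1},D^{2}$ with $|D^{1}|\ge 2n$ and $|D^{2}|\ge |D^{1}|\cdot m$ and splits on $|D^{1}\cap D^{2}|\in\{0,1,\ge 2\}$, finding the long ear by pigeonhole in the last case. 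So your approach is genuinely the same strategy in disguise, but the path-based formulation introduces the tail problem; switching to cycles (which the hypothesis lets you do for free via Lemma~\ref{cycle}) removes it.
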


\begin{proof}
If $X=C_n$, the claim follows from Lemma \ref{cycle}. Therefore, suppose that $X=C_{n,m}$. Since $G$ contains arbitrarily long paths, there is a cycle $D^1$ in $G$ of length at least $2n$ and a cycle $D^2$ of length at least $|D^1|\cdot m$ by Lemma \ref{cycle}.

If $D^1\cap D^2=\emptyset$, consider two disjoint $D^1$--$D^2$ paths $P^1$ and $P^2$ in $G$, which exists since $G$ is 2-connected. As $|D^1|\geq 2n$ and $|D^2|\geq 2m$, it is easy to find a subdivision of $C_{n,m}$ in $G':=D^1\cup D^2\cup P^1\cup P^2$. If $D^1\cap D^2=\{v\}$ for some vertex $v$, then let $P^1:=v$ be a path of length 1. Since $G$ is 2-connected, there is a $D^1$--$D^2$ path $P^2$ in $G-v$ and again there is a subdivision of $C_{n,m}$ in $G'$. Finally, suppose that $|D^1\cap D^2|\geq 2$ and note that there are at most $|D^1|$ many $D^1$-paths (i.e. paths with both endvertices in $D^1$ but no inner vertices in $D^1$) contained in $D^2$. Therefore, there must be a $D^1$-path $P$ in $D^2$ of length at least $m$ since $|D^2|\geq |D^1|\cdot m$. Then $D^1\cup P$ is a subdivision of $C_{n,m}$ in $G$.
\end{proof}

Note that $C_n$ and $C_{n,m}$ for $n,m\geq 3$ are the only graphs for which Lemma \ref{2-con} is true. Indeed, $C_n$ and $C_{n,m}$ are the only 2-connected graphs which are minors of both graphs in Figure \ref{2-connected-graphs}. To see this, first notice that any 2-connected minor of the right graph in Figure \ref{2-connected-graphs} is a union of cycles all having precisely one edge in common. However, if such a graph is also a minor of the left graph, it can only consist of at most two cycles and thus it is isomorphic to $C_n$ or $C_{n,m}$ for some $n,m$.

\begin{figure}[ht]
\centering
\includegraphics[scale=0.25]{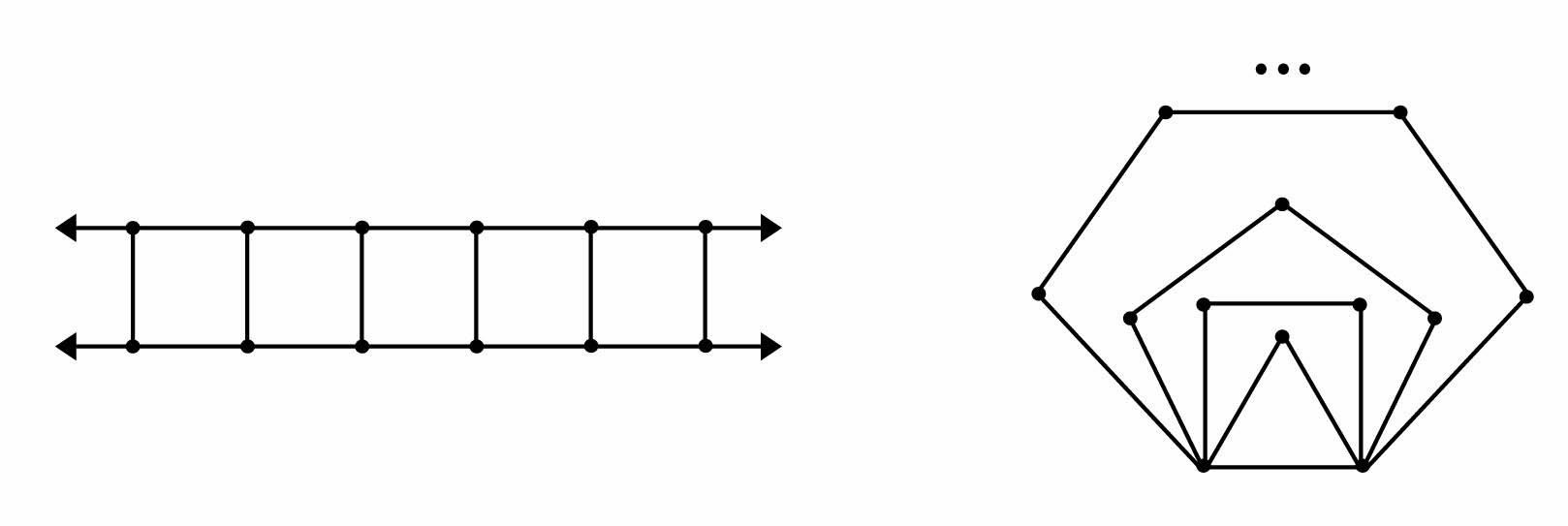}
\caption{Two 2-connected graphs containing arbitrarily long paths.}
\label{2-connected-graphs}
\end{figure}

\section{Universal graphs with forbidden cycle minors}\label{sectionforbiddencycleminors}

\begin{theorem}\label{forbiddencycle}
Let $n,m\geq 3$ and let $X$ be either the graph $C_n$ or the graph $C_{n,m}$. Then there exists a universal $X$-minor-free graph.
\end{theorem}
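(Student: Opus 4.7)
The plan is to exploit the block-cut decomposition of $X$-minor-free graphs, combined with the universal graphs supplied by Corollary~\ref{forbidden-minor}.

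First I would observe that every $2$-connected $X$-minor-free graph $B$ has longest path of some bounded length $N(B) \in \N$: for finite $B$ this is trivial, and for infinite $B$ it is the contrapositive of Lemma~\ref{2-con}. Thus $B$ is $\{X \cup P_{N(B)}\}$-minor-free and so embeds as an induced subgraph of the universal such graph $U_{N(B)}$ provided by Corollary~\ref{forbidden-minor}. Crucially each $U_N$ itself lies in the class $\{X \cup P_N\}$-minor-free, so in particular each $U_N$ is $X$-minor-free.

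Next I would construct the candidate universal graph $\Gamma$ by gluing many copies of the $U_N$'s along single cut vertices. Concretely, set $\Gamma := \bigcup_i \Gamma_i$, with $\Gamma_0 := \bigsqcup_{N \in \N} U_N$ and $\Gamma_{i+1}$ obtained from $\Gamma_i$ by attaching, for every triple $(v, N, u) \in V(\Gamma_i) \times \N \times V(U_N)$, $\aleph_0$ fresh pairwise-disjoint copies of $U_N$ via identifying $u$ with $v$. The construction naturally yields a tree-decomposition $(T, \V)$ of $\Gamma$ whose parts are the vertex sets of the attached copies of the various $U_N$'s, any two adjacent parts meeting in exactly one cut vertex; so $(T, \V)$ has adhesion $1$ and its single-vertex adhesion sets are trivially complete.

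The graph $\Gamma$ is $X$-minor-free by Lemma~\ref{lem:minor_lives_in_part}: since $X$ is $2$-connected (being a cycle or the union of two cycles sharing an edge), any model of $X$ in $\Gamma$ must lie inside a single part, but each part is a copy of some $U_N$, which is $X$-minor-free. For universality, given $G$ in the class I would embed $G$ block by block: pick an initial block $B_0$ of $G$, pick $N_0$ with $B_0$ being $\{X \cup P_{N_0}\}$-minor-free, and embed $B_0$ into the corresponding $U_{N_0}$-component of $\Gamma_0$ using universality of $U_{N_0}$; then recursively, for each cut vertex $c$ of $G$ already sent to some $v \in V(\Gamma)$ and each not-yet-embedded block $B$ of $G$ containing $c$, pick $N$ with $B$ being $\{X \cup P_N\}$-minor-free, fix an induced embedding $f \colon B \hookrightarrow U_N$ with $f(c) =: u$, and extend by sending $B \setminus \{c\}$ into a fresh copy of $U_N$ attached at $v$ via the identification $u \leftrightarrow v$.

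The main obstacle will be verifying that the resulting map is an \emph{induced} embedding rather than merely injective. The crucial property is that any two distinct attached copies in $\Gamma$ meet in at most one vertex (namely the cut vertex at which one was attached), so two vertices of $G$ coming from different blocks are sent to distinct copies and are therefore non-adjacent in $\Gamma$, matching their non-adjacency in $G$; within a single block, induced-ness is inherited from the induced embedding into the corresponding $U_N$. A secondary issue is the bookkeeping needed to ensure the recursion never stalls, which is precisely why the construction supplies $\aleph_0$ copies for every triple $(v, N, u)$: any block of $G$ can be attached at any prescribed cut vertex with any prescribed root.
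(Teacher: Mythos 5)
Your proposal is correct and takes essentially the same route as the paper's proof: the same building blocks supplied by Corollary~\ref{forbidden-minor}, the same recursive gluing of $\aleph_0$ copies along single vertices, $X$-minor-freeness via Lemma~\ref{lem:minor_lives_in_part}, and universality via the block decomposition combined with (the contrapositive of) Lemma~\ref{2-con}. The only difference is cosmetic: the paper first reduces to connected graphs and takes $\aleph_0$ disjoint copies of $\Gamma$ at the very end, whereas your block-by-block embedding as written only covers connected $G$ --- a one-line fix.
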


\begin{proof}
It suffices to construct an $X$-minor-free graph $\Gamma$ such that every connected $X$-minor-free graph is isomorphic to an induced subgraph of $\Gamma$. Then the disjoint union of $\aleph_0$ copies of $\Gamma$ is universal $X$-minor-free.

For all $n\in\N$, let $\Delta_n$ be a universal $\{X,P_n\}$-minor-free graph, which exists by Corollary \ref{forbidden-minor}. For the construction of $\Gamma$, we recursively define a sequence of graphs $\Gamma_0\subseteq\Gamma_1\subseteq\Gamma_2\subseteq\dots$. Let $\Gamma_0$ consist of a single vertex and suppose that we have defined $\Gamma_0,\ldots,\Gamma_i$. To construct $\Gamma_{i+1}$, we begin with the graph $\Gamma_i$. For every vertex $v\in V(\Gamma_i)$, for every integer $n\in\N$, and for every vertex $w\in V(\Delta_n)$, we add $\aleph_0$ disjoint copies of $\Delta_n$ to $\Gamma_i$ and identify $v$ with the vertex corresponding to $w$ in each copy of $\Delta_n$. Finally, we define
$$\Gamma:=\bigcup_{i\in\N}\Gamma_i.$$

Then $\Gamma$ is clearly countable. Furthermore, since $X$ is 2-connected but $\Gamma$ consists of $X$-minor-free graphs which are pasted together along cutvertices, it follows from Lemma \ref{lem:minor_lives_in_part} that $\Gamma$ is $X$-minor-free.

It is left to show that every connected $X$-minor free graph $G$ is isomorphic to an induced subgraph of $\Gamma$. Let $B_0,B_1,B_2,\ldots$ be an enumeration of all \emph{blocks of $G$} (i.e. maximal connected subgraphs of $G$ without cutvertices) such that $B^j:=\bigcup_{i\leq j}B_i$ is connected for all $j\in\N$. Note that each $B^j$ intersects $B_{j+1}$ in exactly one vertex. For every block $B_i$ of $G$, there is an integer $n(i)\in\N$ such that $B_i$ does not contain a path of length $n(i)$ by Lemma \ref{2-con}. Hence $B_i$ is an induced subgraph of $\Delta_{n(i)}$. Thus we can find a copy of $G$ in $\Gamma$ by recursively embedding each block $B_i$ of $G$ into a suitable copy of $\Delta_{n(i)}$ in $\Gamma$.
\end{proof}

\section{Unavoidable minors of 3-connected graphs containing long paths}

For all integer $k\geq 3$, let $W_k$ be the wheel on $k+1$ vertices. Our aim in this section is to prove the following analogue to Lemma \ref{2-con} for 3-connected graphs:

\begin{theorem}\label{3-con}
There is a function $f:\N\to\N$ such that every 3-connected graph containing a path of length $f(k)$ contains $W_k$ as a minor.
\end{theorem}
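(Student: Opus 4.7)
The plan is to invoke the theorem of Oporowski, Oxley, and Thomas \cite{oporowskioxleythomas}: for every $n \in \N$ there exists $N(n)$ such that every $3$-connected graph on at least $N(n)$ vertices contains $W_n$ or $K_{3,n}$ as a minor. I will define $f(k)$ large enough that a path of length $f(k)$ in a $3$-connected graph $G$ forces $|V(G)| \geq N(n)$ for some $n$ much larger than $k$ (the precise size to be determined by the analysis below). Then OOT produces either a $W_n$-minor of $G$, in which case the conclusion is immediate as soon as $n \geq k$, or else a $K_{3,n}$-minor of $G$.

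The remaining work, and the crux of the argument, is to derive a $W_k$-minor in the $K_{3,n}$ case by exploiting the long path. Write $A_1, A_2, A_3, B_1, \dots, B_n$ for the branch sets of the $K_{3,n}$-minor. The strategy is to designate one of the $A_j$'s, say $A_3$, as the hub of the wheel: since each $B_i$ is already adjacent to $A_3$ in $G$, it suffices to locate a cycle in $G - A_3$ of length at least $k$ that visits $k$ distinct $B_i$'s. Since $K_{3,n}$ on its own has cycles only of length up to $6$, the long path $P$ in $G$ must genuinely be used here. By pigeonhole on the sequence of branch sets that $P$ enters, I plan to extract a long subsequence $B_{i_1}, \dots, B_{i_k}$ visited by $P$ in the listed order, join consecutive pairs along subpaths of $P$, and close the resulting open chain into a cycle using connecting pieces inside $A_1$ and $A_2$.

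The principal obstacle will be guaranteeing pairwise internal disjointness of all the connecting pieces, so that the resulting object is an actual cycle rather than merely a closed walk with repeated vertices. A related, secondary difficulty is the degenerate situation where $P$ spends most of its length inside a single $A_j$ rather than hopping between many distinct $B_i$'s; I expect to dispose of this by reapplying Oporowski-Oxley-Thomas to the subgraph induced on the large branch set together with its boundary edges, using Lemma \ref{lem:minor_lives_in_part} to relocate the minor into a well-behaved part of a suitable tree-decomposition of $G$. Assembling these ingredients should yield the desired $W_k$-minor.
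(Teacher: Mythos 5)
Your proposal does not work as written, and the two difficulties you yourself flag are precisely where it breaks down; moreover you omit the infinite case entirely.

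First, the $K_{3,n}$ escape. You are correct that $K_{3,n}$ by itself cannot yield $W_k$ for large $k$ (it is not hard to check that $K_{3,n}$ has no $W_k$ minor once $k$ exceeds a small constant), so you must genuinely weave the long path $P$ together with the $K_{3,n}$-model. But a long path and a $K_{3,n}$-model inside the same $3$-connected graph need not interact in any useful way: $P$ may pass through only finitely many branch sets, may repeatedly re-enter the same few $A_j$'s, or may spend essentially all of its length inside a single branch set. Your proposed remedy for that last case --- "reapply Oporowski--Oxley--Thomas to the subgraph induced on the large branch set" --- does not go through, because a branch set is merely connected, not $3$-connected, so Theorem~\ref{typical3} does not apply to it, and Lemma~\ref{lem:minor_lives_in_part} gives you nothing without first producing a tree-decomposition of the right kind. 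There is no visible route from "$K_{3,n}$-minor plus long path" to "$W_k$-minor" that is shorter than re-proving the theorem.

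Second, the infinite case. Theorem~\ref{typical3} as used here comes from the finite Theorem~\ref{typical4}, so your argument only covers finite $G$. An infinite $3$-connected graph needs a separate treatment: in the paper this is done by splitting on whether $G$ has a ray (then $R_2$ is a minor by Theorem~\ref{typicalinf}, and $R_2$ contains every $W_k$) or is rayless (then Halin's Lemma~\ref{rayless} produces a finite $3$-connected subgraph containing the long path, reducing to the finite case).

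The deeper reason the paper does not run into your $K_{3,k}$ problem is that it does not use the finished statement of Theorem~\ref{typical3} at all. Instead it opens up the proof of Theorem~\ref{typical4}: if the cone $G^+$ over $G$ has no $O_{k+1}$-minor (which it must have if $G$ has no $W_k$-minor), then $G^+$ has tree-width below a bound $w(k+1)$, and Lemma~\ref{longpath} converts the long path in $G$ into a long path in the decomposition tree. By the argument inside \cite{oporowskioxleythomas}, a long path in the tree forces a $D_{k+1}$- or $M_{k+1}$-minor of $G^+$, and both of these (minus a vertex) contain $W_k$. The troublesome $K_{3,k}$ alternative, which is caused by high-degree tree nodes rather than long tree paths, simply never comes up. So the fix is not to patch your $K_{3,n}$ analysis but to use the length of the path as a tree-width/tree-length statement rather than a vertex-count statement.
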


Theorem \ref{3-con} can be derived from arguments in \cite{oporowskioxleythomas} by Oporowski, Oxley and Thomas, and a few additional considerations. We will now briefly summarize the parts of \cite{oporowskioxleythomas} that we need. We begin by defining the graphs $D_k$, $L_k$, $O_k$ and $M_k$ for all integers $k\geq 3$ (Figure \ref{typicalgraphs}). Let $D_k$ be the graph consisting of a cycle of length $k$ together with two additional vertices which are both adjacent to all vertices on the cycle but not to each other. Let $L_k$ be the "ladder" consisting of two paths $v_1v_2\ldots v_k$ and $w_1w_2\ldots w_k$ with the edges $v_iw_i$ added in for all $i\leq k$. The graph $O_k$ is obtained from $L_k$ by adding the edges $v_1v_k$ and $w_1w_k$ and $M_k$ is obtained from $L_k$ by adding the edges $v_1w_k$ and $w_1v_k$. 

\begin{figure}[ht]
\centering
\includegraphics[scale=0.18]{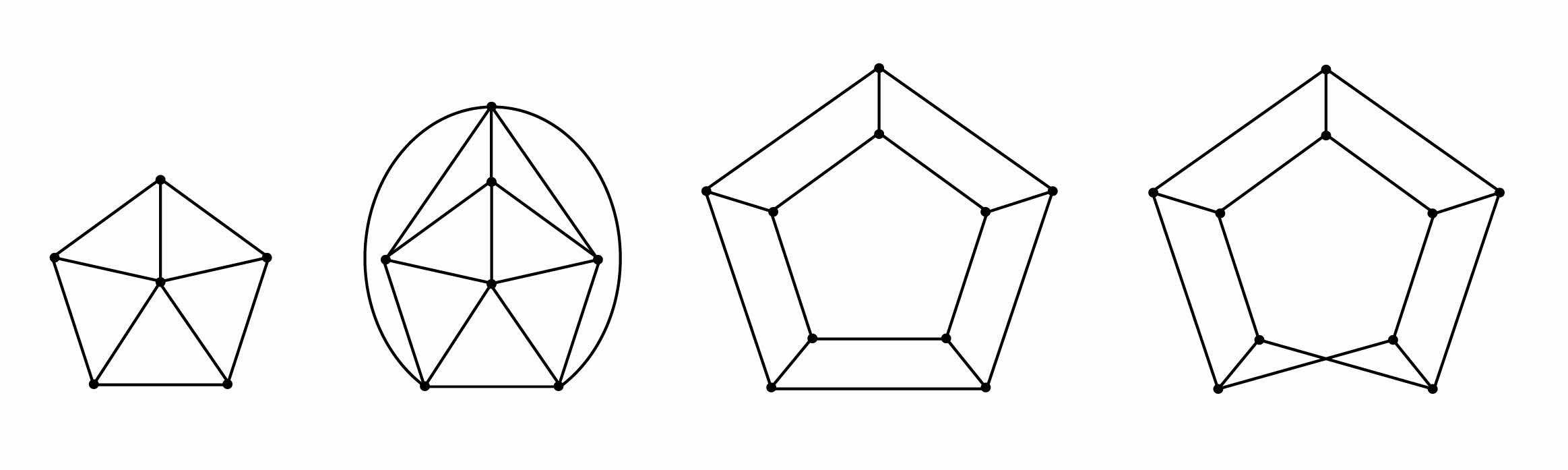}
\caption{The graphs $W_5$, $D_5$, $O_5$ and $M_5$ (from left to right).}
\label{typicalgraphs}
\end{figure}

\begin{theorem}[\cite{oporowskioxleythomas}, (1.4)]\label{typical4}
There is a function $f_4:\N_{\geq 3}\to\N$ such that every finite 4-connected graph $G$ with at least $f_4(k)$ vertices contains $D_k$, $O_k$, $M_k$ or $K_{4,k}$ as a minor.
\end{theorem}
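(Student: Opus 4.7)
The overall plan is to bootstrap from the corresponding structural result for 3-connected graphs, namely Theorem (1.3) of \cite{oporowskioxleythomas}, which asserts a function $f_3:\N_{\geq 3}\to\N$ such that every finite 3-connected graph on at least $f_3(N)$ vertices has $W_N$ or $K_{3,N}$ as a minor. I would set $f_4(k):=f_3(N)$ for some $N=N(k)$ chosen much larger than $k$, large enough for the iterated Ramsey arguments described below. Given a 4-connected graph $G$ on at least $f_4(k)$ vertices, I would first extract a model of $W_N$ or $K_{3,N}$ in $G$, and then use 4-connectivity to upgrade it to one of the four target minors.

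In the wheel case, suppose we have a model of $W_N$ with hub branch set $H$ and cyclically ordered rim branch sets $B_1,\dots,B_N$. Since $G$ is 4-connected, for any two rim indices $i,j$ there must be a further path in $G$ joining $B_i$ and $B_j$ which avoids $H$ together with all rim branch sets on one of the two rim arcs between $B_i$ and $B_j$; otherwise three vertices (one in $H$ and two on the rim) would suffice to separate $B_i$ from $B_j$. Ramsey-reducing the type of attachment of many such ``bypass'' paths yields one of a few canonical patterns: parallel, nested bypasses on the same side of the rim produce a ladder and hence $O_k$; crossing bypasses produce $M_k$; bypasses concentrated at a single external branch set produce a second hub, yielding $D_k$ together with $H$, or yield $K_{4,k}$ once that new branch set is combined with $H$ and two further apex-type branch sets arising from additional bypasses.

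In the $K_{3,N}$ case, let the small-side branch sets be $A,B,C$ and the large-side branch sets be $X_1,\dots,X_N$. By 4-connectivity, the three vertices of a minimal $A\cup B\cup C$-transversal cannot separate any $X_i$ from the rest, so there is extra linkage among the $X_i$ inside $G-(A\cup B\cup C)$. Ramsey-reducing the type of this extra linkage produces either a common further branch set adjacent to many $X_i$ (which combined with $A,B,C$ yields $K_{4,k}$) or a long chain of internally disjoint $X_i$--$X_j$ paths, which combined with two or three of $A,B,C$ realises one of $D_k$, $O_k$, or $M_k$ as a minor.

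The main obstacle is the bookkeeping in these Ramsey reductions: the bypass paths may share internal vertices with each other or with the already-fixed branch sets, and at each step one must restrict to a sub-selection of indices on which simultaneous internal disjointness holds. No genuinely new idea beyond Menger's theorem and iterated Ramsey is needed, but controlling the quantitative dependence of $N(k)$ requires iterating Ramsey a bounded number of times, and keeping the case analysis exhaustive without overlooking a further ``hybrid'' bypass pattern is the most delicate part of the argument; for this reason I would ultimately defer to the detailed case analysis of \cite{oporowskioxleythomas}.
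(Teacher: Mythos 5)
Your plan runs in the opposite logical direction from the paper and, more importantly, its key connectivity step is not sound. The paper does not reprove (1.4) at all: it only outlines the proof of Oporowski--Oxley--Thomas, which first excludes the planar graph $O_k$ to get bounded tree-width via Robertson--Seymour, then builds a special tree-decomposition and shows that a high-degree node of the decomposition tree forces a $K_{3,k}$/$K_{4,k}$ minor while a long path in the tree forces $D_k$ or $M_k$; the paper needs exactly these internal quantities ($w(k)$, $p(k)$ and the decomposition) later, in Corollary \ref{finite} together with Lemma \ref{longpath}. Your proposal instead tries to deduce Theorem \ref{typical4} from Theorem \ref{typical3}. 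Note that in this paper Theorem \ref{typical3} is itself deduced from Theorem \ref{typical4} by the apex-vertex trick, so within this presentation your route is circular unless you supply an independent proof of \ref{typical3}; and even granting \ref{typical3} as a black box, ending by ``deferring to the detailed case analysis of [OOT]'' means the argument is not actually a proof of the statement it cites.

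The concrete gap is in the Menger step. You claim that if no $B_i$--$B_j$ path avoids the hub branch set $H$ together with one rim arc, then ``three vertices (one in $H$ and two on the rim) would suffice to separate $B_i$ from $B_j$.'' This treats branch sets as single vertices. The absence of such a bypass only produces a separator consisting of \emph{whole} branch sets (all of $H$ plus all branch sets of an arc), which can be arbitrarily large, so there is no contradiction with 4-connectedness; and one cannot in general reroute the model so that $H$ is small (in hub-heavy 4-connected graphs every $W_N$-model may need a large hub branch set). The same conflation occurs in your $K_{3,N}$ case: deleting a three-vertex ``transversal'' of $A\cup B\cup C$ does not delete those branch sets, so 4-connectivity gives no extra linkage among the $X_i$ in $G-(A\cup B\cup C)$. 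Without a correct replacement for this step, the subsequent Ramsey classification of ``bypass patterns'' has nothing to classify, and the case analysis is in any event only gestured at. Finally, even a repaired version of your route would not yield the bounded-width tree-decomposition and the functions $w$ and $p$ that the paper extracts from the original proof, which is the reason the outline is reproduced here in the first place.
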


\begin{proof}[Proof outline]\renewcommand{\qedsymbol}{}
Let $k\in\N$ and assume that $O_k$ is not a minor of $G$.
By \cite{robertsonseymour}, there is an integer $w(k)$ such that the tree-width of $G$ is less than $w(k)$. This information is used for constructing a tree-decomposition $(T,\V)$ of $G$ of width less than $w(k)$ satisfying certain additional properties, which we will not name here. Next, it is derived that there are functions $d,p:\N\to\N$ (independent of $G$ and $(T,\V)$) with the following properties. If $T$ contains a node of degree $d(k)$, then $K_{3,k}$ is a minor of $G$, and if $T$ contains a path of length $p(k)$, then $D_k$ or $M_k$ is a minor of $G$.

If $|V(G)|$ is large, then $|V(T)|$ must be large, too, as the width of $(T,\V)$ is bounded. Hence, by choosing $f_4(k)$ sufficiently large, we can ensure that $T$ contains a node of degree $d(k)$ or a path of length $p(k)$, which proves the theorem.
\end{proof}

\begin{theorem}[\cite{oporowskioxleythomas}, (1.3)]\label{typical3}
There is a function $f_3:\N_{\geq 3}\to\N$ such that every 3-connected graph $G$ with at least $f_3(k)$ vertices contains the wheel $W_k$ or the complete bipartite graph $K_{3,k}$ as a minor.
\end{theorem}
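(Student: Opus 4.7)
The plan is to derive Theorem~\ref{typical3} from Theorem~\ref{typical4} by a Tutte-style decomposition of the 3-connected graph $G$ along its 3-separations. First I would invoke a classical decomposition result: every 3-connected graph admits a tree-decomposition $(T,\V)$ of adhesion at most~$3$ whose every torso is either 4-connected or has at most some absolute constant $c$ vertices, and whose 3-element adhesion sets are genuine 3-separators of $G$. Such a decomposition allows me to \emph{lift} minors from torsos back to $G$: each virtual edge on a 3-adhesion set can be realised by a path through the complementary side of the separator, using that the three separator vertices admit internally disjoint connections to that side by 3-connectivity.

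Fix $k$ and choose $k'$ so large that each of the graphs $D_{k'}$, $O_{k'}$, $M_{k'}$, $K_{4,k'}$ contains $W_k$ or $K_{3,k}$ as a minor. Indeed, $D_{k'}$ visibly contains $W_{k'}$ (delete one apex vertex); $O_{k'}$ contains $W_{k'}$ (contract one of its two long cycles to a single point, turning the rungs into spokes); $M_{k'}$ similarly contains $W_{\lfloor k'/2\rfloor}$; and $K_{4,k'}$ trivially contains $K_{3,k'}$. So if some torso $H_t$ of the decomposition has at least $f_4(k')$ vertices, then Theorem~\ref{typical4} applied to $H_t$ produces one of these four minors, and the lifting argument transports the resulting $W_k$- or $K_{3,k}$-minor back to $G$ itself.

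Otherwise every torso has at most $c$ vertices. Choosing $f_3(k)$ large enough then forces $T$ to be correspondingly large, so $T$ has either a vertex of very large degree or a very long path. A node $t\in V(T)$ of degree $\geq d$ is incident to $d$ adhesion sets, and since $|V_t|\leq c$ the pigeonhole principle forces many of them to coincide on a single 3-vertex set $S$; each of the corresponding branches of $T-t$ contracts to a connected subgraph of $G$ attached (through $S$) to all three vertices of $S$, so contracting the branches in parallel yields a $K_{3,k}$-minor of $G$. In the other subcase, a long path in $T$ with size-bounded torsos corresponds to a long chain of small 3-connected parts glued along triangles, and the global 3-connectivity of $G$ forces the ends of this chain to close up; contracting each part to a single vertex then exhibits one of $D_k$, $O_k$, or $M_k$, and hence $W_k$, as a minor of $G$.

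The main obstacle I expect is the structural preparation and the closing-up argument in the long-path subcase. Obtaining a Tutte-style decomposition whose torsos are either 4-connected or of bounded size requires a careful analysis of 3-separations, and showing that a long path of small torsos in a 3-connected graph must close up into a cylinder or M\"obius-ladder-like minor relies essentially on 3-connectivity and demands bookkeeping of how consecutive torsos share their 3-separators. By comparison, the high-degree pigeonhole step and the minor-lifting across adhesion sets are standard, and the function $f_3$ is then assembled quantitatively from $f_4$, the bound $c$, and the path/degree tradeoff in a bounded-part tree-decomposition.
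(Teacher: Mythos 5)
Your approach diverges sharply from the paper's, and it has a real gap at the very first step. The paper's proof of Theorem~\ref{typical3} is a short cone argument: set $f_3(k):=f_4(k+1)$, add a new vertex $v$ adjacent to all of $V(G)$ to obtain a 4-connected graph $G^+$ with $\geq f_4(k+1)$ vertices, apply Theorem~\ref{typical4} to get one of $D_{k+1}$, $O_{k+1}$, $M_{k+1}$, $K_{4,k+1}$ as a minor of $G^+$, and then observe that deleting \emph{any} one vertex from each of these graphs still leaves $W_k$ or $K_{3,k}$ as a minor. Since at most one branch set of the model in $G^+$ contains the apex $v$ (and, as $v$ is universal, one may shrink that branch set to $\{v\}$), the remaining model lives in $G=G^+-v$, finishing the proof without any decomposition at all.

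The decomposition result you invoke as ``classical'' --- that every 3-connected graph has a tree-decomposition of adhesion at most $3$ whose torsos are either 4-connected or of absolutely bounded size --- is not a known theorem. Tutte's decomposition along 2-separations into 3-connected parts and cycles has no clean analogue one level up: there are 3-connected graphs in which every torso of any adhesion-$\leq 3$ decomposition fails to be 4-connected while still being arbitrarily large. The closest available result (Grohe's decomposition into quasi-4-connected components) only yields quasi-4-connected, not 4-connected, torsos and would require re-deriving Theorem~\ref{typical4} for quasi-4-connected graphs, which is not immediate. Beyond this, your ``closing-up'' argument in the long-path subcase --- that a chain of small torsos glued along triangles must yield $D_k$, $O_k$, or $M_k$ --- is asserted but not supported, and this is in fact where most of the actual work in \cite{oporowskioxleythomas} resides (it is exactly what drives their proof of (1.4), which the paper black-boxes). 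So while your high-level strategy is not unreasonable, the first and last steps are gaps rather than lemmas, and the overall route is far heavier than the one the paper takes.
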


\begin{proof}
Let $f_3(k):=f_4(k+1)$. We define the 4-connected graph $G^+$ by adding a vertex $v$ to $G$ and edges from $v$ to all vertices of $G$. By Theorem \ref{typical4}, $G^+$ contains one of $D_{k+1}$, $O_{k+1}$, $M_{k+1}$ or $K_{4,k+1}$ as a minor. Note that for any vertex $v$, the wheel $W_k$ is a minor of $D_{k+1} - v$, $O_{k+1} - v$, and $M_{k+1} - v$, and $K_{3,k}$ is a minor of $K_{4,k+1} - v$. Hence $G$ contains either $W_k$ or $K_{3,k}$ as a minor, as desired.
\end{proof}

By combining the results above from \cite{oporowskioxleythomas} with Lemma \ref{longpath}, we can prove Theorem \ref{3-con} for finite graphs:

\begin{corollary}\label{finite}
There is a function $f:\N\to\N$ such that every finite 3-connected graph containing a path of length $f(k)$ contains $W_k$ as a minor.
\end{corollary}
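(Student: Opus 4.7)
The plan is to emulate the proof of Theorem \ref{typical3}, but to route the long path of $G$ through the tree-decomposition machinery inside the proof of Theorem \ref{typical4} instead of relying on a mere vertex count. Given a finite 3-connected graph $G$ with a path of length $f(k)$ (for an $f$ to be specified), I would form the auxiliary graph $G^+$ by adding one apex vertex $v$ adjacent to all of $V(G)$; the 3-connectedness of $G$ guarantees that $G^+$ is 4-connected, and the long path of $G$ remains a path of the same length in $G^+$.

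I would then split into two cases according to whether $O_{k+1}$ is a minor of $G^+$. If it is, then because $W_k$ is a minor of $O_{k+1}-u$ for every vertex $u$ of $O_{k+1}$, dropping the (at most one) branch set of the $O_{k+1}$-model that contains $v$ produces $W_k$ as a minor of $G^+-v = G$; this is exactly the apex-deletion trick already used in the proof of Theorem \ref{typical3}. If $O_{k+1}$ is not a minor of $G^+$, then the outline of the proof of Theorem \ref{typical4} supplies a tree-decomposition $(T,\V)$ of $G^+$ of width less than $w(k+1)$, with the additional property that any path of length $p(k+1)$ in $T$ certifies $D_{k+1}$ or $M_{k+1}$ as a minor of $G^+$. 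This is where Lemma \ref{longpath} enters: setting $f(k) := \ell_{w(k+1)}(p(k+1))$ ensures that the long path of $G^+$ forces a path of length $p(k+1)$ in $T$, hence one of $D_{k+1}, M_{k+1}$ as a minor of $G^+$, and then the same apex-deletion trick yields $W_k$ as a minor of $G$.

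The main obstacle is that the argument leans on internal ingredients of the proof of Theorem \ref{typical4} in \cite{oporowskioxleythomas}, namely the functions $w$ and $p$ and the particular tree-decomposition they construct, rather than on the published statement of that theorem alone. To tidy this up one would need to cite or reprove the intermediate claim that a 4-connected graph with no $O_k$-minor admits a tree-decomposition of bounded width in which a long path in the tree yields $D_k$ or $M_k$ as a minor. Once this is in hand, the combination with Lemma \ref{longpath} described above is short and essentially mechanical, and care only needs to be taken with the standard observation that deleting the apex $v$ from $G^+$ can cost at most one vertex of any minor model, which is absorbed by passing from the minor $X$ to $X-u$ and using that $W_k$ is a minor of each of $D_{k+1}-u$, $O_{k+1}-u$, $M_{k+1}-u$.
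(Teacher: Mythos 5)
Your proposal is correct and matches the paper's own proof essentially step for step: the same choice $f(k)=\ell_{w(k+1)}(p(k+1))$, the same apex graph $G^+$, the same case split on whether $O_{k+1}$ is a minor of $G^+$, the same invocation of Lemma \ref{longpath} to force a long path in the tree-decomposition from the proof of Theorem \ref{typical4}, and the same apex-deletion step as in the proof of Theorem \ref{typical3}. The caveat you raise about leaning on internal ingredients ($w$, $p$, and the specific tree-decomposition) of \cite{oporowskioxleythomas} applies equally to the paper, which likewise cites the proof outline rather than the published statement.
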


\begin{proof}
Let $f(k):=\ell_{w(k+1)}(p(k+1))$ for all $k\in\N$ where $w$ and $p$ are the functions from the proof outline of Theorem \ref{typical4} and $\ell_{w(k+1)}$ is the function from Lemma \ref{longpath}.
Let $G$ be a finite 3-connected graph containing a path of length $f(k)$ and consider the graph $G^+$ from the proof of Theorem \ref{typical3}.
If $O_{k+1}$ is a minor of $G^+$, then $W_k$ is a minor of $G$ as desired. Otherwise, let $(T,\V)$ be the tree-decomposition of $G^+$ of width less than $w(k+1)$ from the proof of Theorem \ref{typical4}. By Lemma \ref{longpath}, $T$ contains a path of length $p(k+1)$. Therefore, $G^+$ contains one of $D_{k+1}$ or $M_{k+1}$ as a minor, as we have seen in the proof outline of Theorem \ref{typical4}. Hence $W_k$ is a minor of $G$ as in the proof of Theorem \ref{typical3}.
\end{proof}

Next, we need to understand another result from \cite{oporowskioxleythomas} about unavoidable minors of infinite 3-connected graphs. Let $R_2$ be the graph consisting of a ray and two additional non-adjacent vertices which are both adjacent to all vertices of the ray.

\begin{theorem}[\cite{oporowskioxleythomas},  (5.2)]\label{typicalinf}
Every infinite 3-connected graph $G$ contains $K_{3,\omega}$ or $R_2$ as a minor.
\end{theorem}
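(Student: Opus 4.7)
The plan is to establish a dichotomy: either $G$ contains three pairwise disjoint rays belonging to a common end, in which case we build an $R_2$-minor by contraction, or $G$ fails to contain such a configuration, in which case we extract $K_{3,\omega}$ as a minor. To pursue the first alternative, if $G$ contains any ray I would start from one ray $S_1$ and attempt a Halin-style extension: use the 3-connectivity of $G$ to produce infinitely many internally disjoint $S_1$--$S_1$ paths whose endpoints march off to infinity along $S_1$, reroute along them to obtain a ray $S_2$ disjoint from $S_1$ but in the same end, and then repeat the construction once more to obtain $S_3$. In the locally finite setting this succeeds (it is essentially Halin's theorem). In the non-locally-finite setting the extension may fail --- for instance, $K_{3,\omega}$ with an added path on its infinite side is 3-connected and contains no three pairwise disjoint rays --- and any such failure should be caused either by $G$ being rayless altogether or by a finite bottleneck of at most two vertices through which the relevant paths must funnel. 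In either sub-case I would use the structural information together with 3-connectivity to locate three vertices (or three branch sets) sharing a common infinite set of neighbours, yielding a $K_{3,\omega}$-minor.

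Suppose now that three pairwise disjoint rays $S_1,S_2,S_3$ in a common end $\omega$ have been produced. Because they belong to the same end, for every finite $F\subseteq V(G)$ there are $S_i$--$S_j$ paths avoiding $F$; iterating this, one obtains pairwise disjoint infinite families of $S_1$--$S_3$ paths $\{P_i\}_{i\in\N}$ and of $S_2$--$S_3$ paths $\{Q_i\}_{i\in\N}$, each $P_i$ and $Q_j$ internally disjoint from $S_1\cup S_2$, with their $S_3$-endpoints tending to infinity along $S_3$. After thinning to a subsequence I may assume the endpoints on $S_3$ occur in an alternating pattern: $P$-endpoint, $Q$-endpoint, $P$-endpoint, and so on. I then contract $S_1$ together with all the $P_i$'s into one branch set (labelled $a$), contract $S_2$ together with all the $Q_j$'s into one branch set (labelled $b$), and for each consecutive alternating pair of endpoints on $S_3$ contract the intervening subpath of $S_3$ to a single vertex $r_i$. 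Each $r_i$ then sends an edge to both the branch set of $a$ and the branch set of $b$, and since the definition of a model permits the deletion of edges, I may discard every edge between these two branch sets so that $a$ and $b$ remain non-adjacent. The result is a model of $R_2$ in $G$.

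The main obstacle is the case where three disjoint rays in a common end cannot be obtained. In the locally finite world this case does not arise (by Halin's theorem), but 3-connectivity alone does not imply local finiteness, and one must show carefully that a failed extension exposes either a rayless structure or a finite bottleneck that in turn forces $K_{3,\omega}$. The bottleneck analysis is the delicate step: one must argue that whenever a finite set blocks the extension, 3-connectivity forces infinitely many internally disjoint paths through it between a suitable pair of vertices, and iterating this produces a third \emph{centre} vertex which together with the other two yields a $K_{3,\omega}$-minor. Once three rays in one end are in hand, the contraction step producing the $R_2$-minor is essentially bookkeeping.
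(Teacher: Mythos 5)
Your case split does not match what the theorem actually requires, and the branch you fall back on is false. You split on ``three pairwise disjoint rays in a common end'' versus ``not'', and in the second case (when $G$ is not rayless) you propose to show that a two-vertex bottleneck forces a $K_{3,\omega}$-minor. But consider $G=R_2$ itself, or any 3-connected graph consisting of a ray dominated by exactly two vertices of infinite degree: it is infinite, 3-connected, contains a ray but not even two disjoint rays, and it has \emph{no} $K_{3,\omega}$-minor --- any three disjoint connected branch sets each adjacent to infinitely many further disjoint branch sets would need three disjoint connected sets of infinite ``attachment'', which such a graph cannot supply (only the two hubs have infinite degree). So in the rayful-but-few-rays case the correct conclusion is an $R_2$-minor, not a $K_{3,\omega}$-minor, and your plan gives no route to it; this is exactly the hard non-locally-finite situation the theorem is about. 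The dichotomy used in \cite{oporowskioxleythomas} (and quoted in this paper's outline) is different and sharper: if $G$ contains \emph{any} ray, then 3-connectivity already yields $R_2$ as a minor (when many disjoint rays are unavailable, the ray is dominated, and the dominating structure supplies the two hubs); only when $G$ is rayless does one prove the $K_{3,\omega}$ outcome. Your sketch also leaves both of its terminal steps (``bottleneck analysis'' and ``locate three vertices with a common infinite neighbourhood'' in the rayless case) as unproved assertions, and the rayless case is itself a substantial argument, not a corollary of 3-connectivity.

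There are also smaller defects in the one branch you do carry out. The families $\{P_i\}$ and $\{Q_j\}$ are only required to be internally disjoint from $S_1\cup S_2$; nothing prevents a $P_i$ from meeting a $Q_j$, in which case your two hub branch sets intersect, so an extra disjointness or rerouting argument is needed. Likewise the $S_3$-endpoints of the $P_i$ must be excluded from the hub branch set (otherwise it meets the ray branch sets), and after thinning you must also discard the portions of $S_3$ between consecutive chosen endpoints consistently so that the contracted intervals really form a ray of branch sets each adjacent to both hubs. These are repairable, but the misassigned dichotomy in the previous paragraph is a genuine gap: as written, the proof cannot produce the required minor for graphs like $R_2$ with a dominated ray.
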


\begin{proof}[Proof outline]\renewcommand{\qedsymbol}{}
If $G$ contains a ray, then one can show that $R_2$ is a minor of $G$. If $G$ is rayless, one can show that $K_{3,\omega}$ is a minor of $G$.
\end{proof}

We can now complete the proof of Theorem \ref{3-con}, using the following result of Halin:

\begin{lemma}[\cite{halin}, Lemma 3]\label{rayless}
If $G$ is a rayless $k$-connected graph and $U\subseteq V(G)$ is finite, then there exists a finite $k$-connected subgraph $H$ of $G$ with $U\subseteq V(H)$.
\end{lemma}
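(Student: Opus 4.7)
The plan is to prove Lemma~\ref{rayless} by transfinite induction on the Halin rank $\rho(G)$ of the rayless graph $G$. Recall that this rank is defined recursively by $\rho(G)=0$ precisely when $G$ is finite, and otherwise $\rho(G)$ is the least ordinal $\alpha$ such that some finite set $S\subseteq V(G)$ has every component of $G-S$ of rank strictly less than $\alpha$. A classical theorem of Schmidt ensures that every rayless graph admits such an ordinal rank, which makes the induction well-founded. The base case $\rho(G)=0$ is immediate, since $G$ is itself finite and $k$-connected, and I would simply take $H:=G$.

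For the inductive step, I would first fix a finite $S\subseteq V(G)$ witnessing $\rho(G)$, after enlarging $S$ so that $U\subseteq S$. Using Menger's theorem inside the $k$-connected graph $G$, for every pair $u,v\in S$ I would fix a system of $k$ internally disjoint finite $u$-$v$ paths in $G$; their union with $S$ forms a finite initial subgraph $F_0\subseteq G$. If $F_0$ is already $k$-connected, we are done. Otherwise some separator $T\subset V(F_0)$ of size less than $k$ separates a pair $a,b$ of vertices of $F_0$; by $k$-connectivity of $G$ there is then a $T$-avoiding $a$-$b$ path in $G$, which I would adjoin to obtain $F_1$, and then iterate.

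The hard part will be to show that this iteration terminates with a finite subgraph. Here the inductive hypothesis on rank is essential: any augmenting path we ever add runs through $S$ together with portions inside components of $G-S$, all of which have strictly smaller rank than $G$. Applying the inductive hypothesis to each such component $C$ (enlarged by its finitely many attachment vertices in $S$, which is still rayless and has rank $<\rho(G)$) would yield a finite $k$-connected subgraph of $G$ out of which the needed augmenting paths can be drawn once and for all. Only finitely many components meet $F_0$ initially, each is resolved in finitely many steps by induction, and assembling those finite pieces with $F_0$ produces a finite $H\subseteq G$ containing $U$.

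The remaining technical verification is that $H$ is itself $k$-connected, not just a union of $k$-connected subgraphs glued across $S$. This would follow from the bookkeeping above: any candidate separator of $H$ of size less than $k$ would, by construction, have to meet each of the $k$ internally disjoint Menger-paths stored either in $F_0$ or inside one of the local pieces, which is impossible. Making this glueing argument precise -- in particular, ensuring that the local $k$-connected pieces are attached to $S$ in a way compatible with the $k$ internally disjoint paths fixed at the start -- is, I expect, the principal technical obstacle of the proof.
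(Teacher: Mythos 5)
The paper does not prove this lemma at all: it is quoted verbatim from Halin (\cite{halin}, Lemma~3) and used as a black box, so there is no in-paper argument to compare yours against. Judged on its own, your plan has a genuine gap at its core. Your transfinite induction is on rayless \emph{$k$-connected} graphs, but in the inductive step you propose to ``apply the inductive hypothesis'' to a component $C$ of $G-S$ together with its attachment vertices in $S$. That graph is rayless and (as you claim, correctly, after a small argument that adding finitely many vertices does not raise Schmidt's rank) has rank $<\rho(G)$, but it is almost never $k$-connected, so the hypothesis simply does not apply to it and the induction does not close. To make a rank induction work one has to strengthen the statement to one about \emph{arbitrary} rayless graphs --- for instance, a Menger-type statement guaranteeing a finite subgraph in which prescribed pairs of vertices retain $k$ internally disjoint connecting paths whenever they have them in $G$ --- and finding and proving the right such strengthening is exactly where the substance of Halin's lemma lies; your proposal presupposes it rather than supplies it.

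A second, related gap is the termination argument. When you repair a small separator $T$ of the current finite graph by an augmenting $a$--$b$ path of $G$, that path may pass through components of $G-S$ that did not meet $F_0$, so new components are drawn in and new separators are created; your claim that only finitely many components are involved and that the needed augmenting paths can be extracted from each piece ``once and for all'' is circular, since which paths are needed depends on the final graph $H$ you are still constructing. Nothing in the proposal shows the process stabilises, and raylessness --- which is essential, as the lemma fails for graphs with rays --- never actually enters your termination argument except through the unjustified appeal to the inductive hypothesis. Finally, even granting finite $k$-connected pieces inside the components, gluing them to $F_0$ along subsets of $S$ (which may meet a given piece in fewer than $k$ vertices) does not by itself yield a $k$-connected union; you flag this yourself as the ``principal technical obstacle,'' but it is left unresolved, so what you have is a plausible plan rather than a proof.
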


\begin{proof}[Proof of Theorem \ref{3-con}]
Let $f$ be the function from Corollary \ref{finite} and let $G$ be a 3-connected graph containing a path $P$ of length $f(k)$. If $G$ contains a ray, then $G$ contains $R_2$ as a minor by the proof outline of Theorem \ref{typicalinf}. Since $R_2$ contains $W_k$ as a minor, also $G$ does. Therefore, suppose that $G$ is rayless. By Lemma \ref{rayless}, there is a finite 3-connected subgraph $G'$ of $G$ containing $P$. Therefore, $W_k$ is a minor of $G'$ by Corollary \ref{finite} and hence also of $G$.
\end{proof}

\section{Universal graphs with forbidden wheel minors}\label{forbiddenwheelminors}

In this section, we show that there exists a universal $W$-minor-free graph $\Gamma$ for every wheel $W$; the proof is based on similar ideas as the proof of Theorem \ref{forbiddencycle}.
Instead of decomposing graphs into their 2-connected blocks as in the proof of Theorem \ref{forbiddencycle}, we will decompose graphs into 3-connected graphs and cycles.
For this, we use the following theorem of Richter \cite{richter}, which generalises a well-known result of Tutte \cite{tutte} to infinite graphs.

\begin{lemma}[\cite{richter}]\label{tree-decomposition}
Every graph $G$ has a tree-decomposition of adhesion at most 2 in which all torsos are 3-connected graphs, cycles, or isomorphic to $K_1$ or $K_2$. Additionally, if $vw$ is any edge that is contained in the torso of a part $V_t$ but not in $G$, then there is a $v$--$w$ path in $G$ with no inner vertices in $V_t$.
\end{lemma}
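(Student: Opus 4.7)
The plan is to reduce first to the 2-connected case and then carry out a Tutte-style decomposition using a maximal nested family of 2-separations, adapted to infinite graphs by working at the level of separation systems.

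First, I would produce a coarse tree-decomposition of $G$ of adhesion $\leq 1$ from the block-cut structure: take a bipartite tree $T$ with one node for each block (maximal 2-connected subgraph or bridge) and one node for each cut vertex, and let the part at a block-node be its vertex set and the part at a cut-vertex-node be the singleton. Torsos at block-nodes are the blocks themselves, torsos at cut-vertex-nodes are $K_1$, and all adhesion sets have size $0$ or $1$. Since tree-decompositions can be refined part-by-part, it suffices to produce, for each 2-connected block $B$, a tree-decomposition of $B$ of adhesion $\leq 2$ whose torsos are 3-connected, cycles, or $K_2$, and whose virtual edges are witnessed by paths through the complementary side. The resulting structures can then be glued along cut vertices.

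Now fix a 2-connected graph $H$ (a block). Consider the set $S$ of all 2-separations $(A,B)$ of $H$, i.e.\ partitions $V(H) = A \cup B$ with $|A \cap B|=2$ and no edge from $A \setminus B$ to $B \setminus A$. Call two separations \emph{nested} if some inclusion among $A \subseteq C$, $A \subseteq D$, $B \subseteq C$, $B \subseteq D$ holds, and use Zorn's lemma to pick a maximal nested sub-family $\mathcal{N} \subseteq S$. A nested set of separations induces a canonical tree-decomposition in the sense of Dicks--Dunwoody / Carmesin--Diestel--Hundertmark--Stein: the tree $T$ is obtained by orienting each $\{A,B\} \in \mathcal{N}$ in both directions and quotienting by the natural order, and the part $V_t$ at a node $t$ consists of those vertices of $H$ that lie on the $t$-side of every separation in $\mathcal{N}$ visible at $t$. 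For each edge $tu$ of $T$, the adhesion $V_t \cap V_u$ is exactly the 2-element separator of the corresponding separation, giving adhesion $\leq 2$.

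The main claim is that each torso is either $K_2$, a cycle, or 3-connected. Here the \emph{torso} of $V_t$ adds, for each adhesion set $\{v,w\}$ at an edge $tu$, a virtual edge $vw$. If this torso had a 2-separation, one could lift it to a 2-separation of $H$ not in $\mathcal{N}$ but still nested with every member of $\mathcal{N}$, contradicting maximality; hence the torso is 2-connected with no 2-separation, and Tutte's classical dichotomy identifies such graphs as cycles or 3-connected (with $K_2$ as the degenerate case). The path property for virtual edges is automatic: a virtual edge $vw$ in the torso of $V_t$ corresponds to the $\{v,w\}$-separation $(V_t, B)$ with $B \setminus V_t$ nonempty, and because $H$ is 2-connected the subgraph $H[B]$ contains a $v$--$w$ path whose interior lies in $B \setminus V_t$, hence outside $V_t$.

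The main obstacle is the infinite book-keeping in the middle step: one must verify carefully that the poset obtained from $\mathcal{N}$ really is a tree (no chains without a supremum causing limit nodes to be lost), that every vertex of $H$ lies in some part $V_t$, and that a hypothetical 2-separation of a torso genuinely lifts to a separation of $H$ nested with all of $\mathcal{N}$. These points are exactly where Richter's work is needed; for finite graphs they reduce to Tutte's original argument, but in the infinite case the argument has to be phrased abstractly enough to handle separations with infinitely many nested comparables simultaneously.
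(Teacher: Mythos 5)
The paper itself gives no proof of this lemma: it is quoted verbatim from Richter's paper (generalising Tutte), so the only benchmark is that citation. Your sketch is the natural Tutte-style plan, and several of its ingredients are sound in spirit (the block-tree reduction, the observation that a virtual edge on an adhesion pair $\{v,w\}$ of a genuine 2-separation of a 2-connected graph is witnessed by a $v$--$w$ path through the far side). But the decisive step is missing, and it is exactly the step you defer to Richter in your final paragraph: in an \emph{infinite} 2-connected graph, a Zorn-maximal nested family $\mathcal{N}$ of 2-separations need not induce a tree-decomposition at all. Concretely, let $H=K_{2,\aleph_0}$ with high-degree vertices $u,v$ and independent vertices $x_1,x_2,\dots$, and fix an enumeration $(r_n)_{n\in\N}$ of $\mathbb{Q}$. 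The separations $\bigl(\{u,v\}\cup\{x_n:r_n<q\},\ \{u,v\}\cup\{x_n:r_n\geq q\}\bigr)$ for $q\in\mathbb{Q}$ form a nested chain of order type $\mathbb{Q}$; any maximal nested family containing it cannot be realised by a graph-theoretic tree, since the separations crossed along a path of a tree are discretely ordered, and ``quotienting by the natural order'' produces limit positions whose parts cover no vertices (one must also check separately that every vertex and edge lies in some part, which can fail for such families). So the maximality trick that works for finite graphs does not deliver the tree, and this is precisely the content of Richter's theorem; as written, your proposal is a plan whose hardest step is outsourced to the very reference being proved.

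Two smaller points. First, ``the torso is 2-connected with no 2-separation, and Tutte's classical dichotomy identifies such graphs as cycles or 3-connected'' misstates the dichotomy: cycles of length at least 4 have many 2-separations, and a graph on at least 4 vertices with no 2-separation is simply 3-connected. Indeed, if your lifting argument were carried out it would show that no torso has any 2-separation, so cycle torsos of length $\geq 4$ could never occur (long cycles would be chopped into triangles) -- still consistent with the statement of the lemma, but a sign that the argument does not track the intended structure. Second, the lifting step itself needs more care than ``one could lift it'': you must verify that each adhesion pair lies entirely on one side of the torso separation (this uses the virtual edge), that the lifted separation is distinct from every member of $\mathcal{N}$ and nested also with the members not incident with the part $V_t$, and all of this presupposes that the parts of the decomposition exist -- which, by the first paragraph, is the real issue in the infinite case.
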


\begin{theorem}\label{forbiddenwheel}
Let $X$ be any finite 3-connected graph that is a minor of every 3-connected graph containing arbitrarily long paths. Then there exists a universal $X$-minor-free graph.
\end{theorem}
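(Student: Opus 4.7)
The strategy mirrors Theorem \ref{forbiddencycle}, using Richter's Lemma \ref{tree-decomposition} in place of the block decomposition. As before, it suffices to construct a connected $X$-minor-free graph $\Gamma$ which contains every connected $X$-minor-free graph as an induced subgraph. The first observation is that if $G$ is $X$-minor-free and $(T,\V)$ is a decomposition of $G$ from Lemma \ref{tree-decomposition}, then every $3$-connected torso is itself $X$-minor-free: a model of $X$ in such a torso lifts to one in $G$ by replacing each ``virtual'' edge of the torso (present in the torso but not in $G$) by a $v$--$w$ path avoiding the part, guaranteed by Lemma \ref{tree-decomposition}. By the hypothesis on $X$, each $3$-connected torso therefore contains no path of length exceeding some $n=n(t)\in\N$.

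For the building blocks I would apply Corollary \ref{forbidden-minor} with $c=2$ to obtain, for each $n\in\N$, a universal $\{X,P_n\}^2$-minor-free $2$-edge-coloured graph $\Delta_n$. Colour $1$ will represent edges actually present in the graph being embedded, and colour $0$ will represent ``virtual'' adhesion edges (those size-$2$ adhesion pairs that are not edges of $G$). I would then construct a $2$-graph $\tilde\Gamma=\bigcup_i\tilde\Gamma_i$ recursively, starting from $\tilde\Gamma_0$ a single vertex and, at each step, for every vertex $v$ and every coloured edge $vw$ of $\tilde\Gamma_i$, for every $n$ and every finite cycle length, attaching $\aleph_0$ disjoint copies of $\Delta_n$ and of each $2$-edge-coloured cycle of that length, in every way that identifies $v$ or $vw$ with a vertex or with a coloured edge of matching colour in the attached piece. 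Finally set $\Gamma$ to be the underlying uncoloured graph of $\tilde\Gamma$ restricted to its colour-$1$ edges.

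To verify universality, given a connected $X$-minor-free graph $G$, fix a decomposition of $G$ from Lemma \ref{tree-decomposition} and turn $G$ into a $2$-graph $G^+$ by colouring its edges $1$ and adding a colour-$0$ edge on every size-$2$ adhesion pair that is not already an edge of $G$. Each torso of $G^+$ (as a $2$-graph) is $\{X,P_{n(t)}\}^2$-minor-free, and hence embeds as an induced $2$-coloured subgraph of $\Delta_{n(t)}$ (or of a suitable $2$-coloured cycle). I would then enumerate the parts so that prefix-unions are connected and embed them inductively, using the abundance of attached copies in $\tilde\Gamma$ to glue along the shared adhesion vertex or coloured edge. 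To verify $X$-minor-freeness, I would apply Lemma \ref{lem:minor_lives_in_part} to the auxiliary uncoloured graph $\Gamma^+$ obtained by retaining both colours of $\tilde\Gamma$ as edges: its natural tree-decomposition has adhesion at most $2$, complete adhesion sets, and torsos equal to the underlying graphs of the attached pieces, each of which is $X$-minor-free. Thus $\Gamma^+$, and hence its subgraph $\Gamma$, is $X$-minor-free.

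The main obstacle is the handling of size-$2$ adhesion pairs of $G$ that are not edges: gluing along such a pair must not introduce a spurious edge between the two vertices in $\Gamma$, yet the tree-decomposition used in Lemma \ref{lem:minor_lives_in_part} requires complete adhesion sets. The $2$-edge-colouring is introduced precisely to decouple these two demands, keeping colour-$0$ ``ghost'' edges in the auxiliary $\Gamma^+$ for the minor-freeness argument while discarding them in $\Gamma$ for induced embeddings. Verifying that the attachment procedure is simultaneously coherent with the coloured universality of the $\Delta_n$ and with the uncoloured $X$-minor-freeness of $\Gamma$ is the delicate point of the argument.
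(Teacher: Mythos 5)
Your plan is correct and follows essentially the same route as the paper: Richter's decomposition into 3-connected torsos and cycles, building blocks from Corollary \ref{forbidden-minor} with $c=2$, a 2-edge-colouring whose ``ghost'' colour marks virtual adhesion edges, recursive attachment of the $\Delta_n$ along vertices and coloured edges, Lemma \ref{lem:minor_lives_in_part} for $X$-minor-freeness of the auxiliary graph, and an inductive embedding of the coloured torsos for universality. The only (harmless) deviation is attaching coloured cycles as separate pieces, which is redundant since every cycle torso has bounded length and thus already embeds into a suitable $\Delta_n$.
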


\begin{proof}
The proof is similar to the proof of Theorem \ref{forbiddencycle}. Again, it suffices to construct an $X$-minor-free graph containing all connected $X$-minor-free graphs as induced subgraphs.

For all $n\in\N$, let $\G_n$ denote the class of all (countable) graphs not containing $X$ and $P_n$ as a minor. By Lemma \ref{forbidden-minor}, there is a universal 2-graph $\Delta_n$ in $\G_n^2$.
We recursively define a sequence of 2-graphs $\Gamma^*_0\subseteq\Gamma^*_1\subseteq\Gamma^*_2\subseteq\ldots$. Let $\Gamma^*_0$ consist of a single vertex and suppose that we have defined $\Gamma^*_0,\ldots,\Gamma^*_i$.
For the construction of $\Gamma^*_{i+1}$, we begin with the 2-graph $\Gamma^*_i$. For every vertex $v\in V(\Gamma^*_i)$, for every $n\in\N$, and for every vertex $w\in V(\Delta_n)$, we add $\aleph_0$ disjoint copies of $\Delta_n$ and identify $v$ with the vertex corresponding to $w$ in each copy of $\Delta_n$. Similarly, for every edge $e\in E(\Gamma^*_i)$, for every $n\in\N$, and for every edge $f\in E(\Delta_n)$ of the same colour as $e$, we add $\aleph_0$ disjoint copies of $\Delta_n$ and identify $e$ with the edge corresponding to $f$ in each copy of $\Delta_n$. We choose the edge-colouring of $\Gamma^*_{i+1}$ which is inherited from $\Gamma^*_i$ and from the copies of $\Delta_n$, which completes the construction of $\Gamma^*_{i+1}$. We define
$$\Gamma^*:=\bigcup_{i\in\N}\Gamma^*_i$$
and consider the edge-colouring of $\Gamma^*$ which extends the edge-colourings of each $\Gamma^*_i$. Let $\Gamma$ be the subgraph of $\Gamma^*$ which is induced by all edges of colour 0 in $\Gamma^*$.

We show that the graph $\Gamma$ is universal $X$-minor-free. Firstly, $\Gamma^*$ (considered as an uncoloured graph) and therefore its subgraph $\Gamma$ is $X$-minor-free: It is clear that $\Gamma^*$ is countable. Furthermore, by construction there is a tree-decomposition of $\Gamma^*$ of adhesion 2 with complete adhesion sets such that all graphs which parts induce in $\Gamma^*$ are copies of some $\Delta_n$. Since each $\Delta_n$ is $X$-minor-free, it follows from Lemma \ref{lem:minor_lives_in_part} that also $\Gamma^*$ is $X$-minor-free.

It is left to show that every connected $X$-minor-free graph $G$ is isomorphic to an induced subgraph of $\Gamma$. Consider the tree-decomposition $(T,\V)$ of $G$ from Lemma \ref{tree-decomposition} and let $G^*$ denote the graph obtained from $G$ by adding edges to make all adhesion sets complete. We define a 2-edge-colouring of $G^*$ by colouring all edges in $E(G^*)\cap E(G)$ with 0 and all edges in $E(G^*)\setminus E(G)$ with 1. We will show that the 2-graph $G^*$ is isomorphic to an induced subgraph of the 2-graph $\Gamma^*$, then it follows that the graph $G$ is isomorphic to an induced subgraph of $\Gamma$.

Consider an arbitrary node $t\in V(T)$. Since the adhesion of $(T,\V)$ is 2, by the additional statement in Lemma \ref{tree-decomposition}, the torso $G^*[V_t]$ (viewed as an uncoloured graph) is a minor of $G$. Thus $X$ is not a minor of $G^*[V_t]$, as otherwise $X$ would also be a minor of $G$. Since $G^*[V_t]$ is either 3-connected, a cycle, or isomorphic to $K_1$ or $K_2$, there is an integer $n(t)\in\N$ such that $G^*[V_t]$ does not contain a path of length $n(t)$ by the choice of $X$. Hence the 2-graph $G^*[V_t]$ is isomorphic to an induced subgraph of $\Delta_{n(t)}$. We can now prove that $G^*$ is isomorphic to an induced subgraph of $\Gamma^*$ by fixing an enumeration $t_0,t_1,t_2,\dots$ of $V(T)$ such that the set $\{t_i:i<j\}$ is connected in $T$ for all $j\in\N$, and then recursively embedding each subgraph $G^*[V_{t_i}]$ into a suitable copy of $\Delta_{n(t_i)}$ in $\Gamma^*$.
\end{proof}

\begin{corollary}
For all integers $k\geq 3$, there is a universal $W_k$-minor-free graph.
\end{corollary}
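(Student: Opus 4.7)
The corollary follows essentially immediately by checking the hypotheses of Theorem \ref{forbiddenwheel} with $X := W_k$. The plan splits into two verifications and one application.

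First, I would verify that $W_k$ is a finite 3-connected graph for every $k \geq 3$. Finiteness is obvious since $W_k$ has $k+1$ vertices. For 3-connectedness, the hub has degree $k \geq 3$, every rim vertex has degree $3$, and a short case analysis shows that deleting any two vertices leaves a connected graph: if neither deleted vertex is the hub, the hub together with the surviving rim vertices remains connected; if the hub is one of the two deleted vertices, the remaining rim vertices form a path (since $k \geq 3$ we delete at most one rim vertex, leaving a path on at least $k-1 \geq 2$ vertices).

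Second, I would verify that $W_k$ is a minor of every 3-connected graph $G$ containing arbitrarily long paths. This is exactly Theorem \ref{3-con}: since $G$ contains paths of every length, in particular $G$ contains a path of length $f(k)$, and hence $W_k$ is a minor of $G$.

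Having checked both hypotheses, the conclusion of Theorem \ref{forbiddenwheel} applied with $X := W_k$ directly yields a universal $W_k$-minor-free graph. There is no real obstacle here, since all the substantive work has been done: the machinery of Section \ref{forbiddenwheelminors} together with the structural result of Section 6 (the latter ultimately relying on \cite{oporowskioxleythomas} and Halin's lemma) was designed precisely to cover this family of examples.
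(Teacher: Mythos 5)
Your proposal is correct and follows exactly the paper's route: apply Theorem \ref{forbiddenwheel} with $X=W_k$, using Theorem \ref{3-con} to verify that $W_k$ is a minor of every 3-connected graph containing arbitrarily long paths. The extra verification of 3-connectedness of $W_k$ is a fine (if routine) addition, but the argument is the same.
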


\begin{proof}
The claim follows from Theorem \ref{3-con} and Theorem \ref{forbiddenwheel}.
\end{proof}

\begin{remark}
We do not know whether there are any finite 3-connected graphs $X$ other than wheels which occur as minors in all 3-connected graphs containing arbitrarily long paths. If there are any such graphs $X$, then there is a universal $X$-minor-free graph by Theorem \ref{forbiddenwheel}.    
\end{remark}

\begin{remark}
A short explanation why it is necessary to consider edge-coloured graphs in the proof of Theorem \ref{forbiddenwheel}: The graph $\Gamma^*$ (without its edge-colouring) is already weakly universal $X$-minor-free. However, it is not strongly universal since all its adhesion sets are complete. So an $X$-minor-free graph for which the adhesion sets of the tree-decomposition from Lemma \ref{tree-decomposition} are not complete, might only embed as a subgraph into $\Gamma^*$ but not as an induced subgraph. Fixing this by gluing together the graphs $\Delta_n$ also along incomplete adhesion sets is tricky, since this might create copies of $X$. The purpose of the edges of colour 1 in the proof is to produce incomplete adhesion sets by deleting these edges.
\end{remark}

\begin{remark}
Theorem \ref{thm_main_universal} is also true for the class of all $\kappa$-sized $X$-minor-free graphs for any infinite cardinal $\kappa$. Only minor changes to our proof are necessary to establish that.
\end{remark}

\medskip

\bibliographystyle{amsplain}
\bibliography{bibl}

@article{diestelkuhn,
author = {R. Diestel and D. K\"uhn},
year = "1999",
title = "A Universal Planar Graph Under The Minor Relation",
volume = "32",
number = "2",
pages = "191-206",
journal = "Journal of Graph Theory"
}

@article{cherlinshelah,
title = "Universal graphs with a forbidden subtree",
journal = "Journal of Combinatorial Theory, Series B",
volume = "97",
number = "3",
pages = "293-333",
year = "2007",
author = {G. Cherlin and S. Shelah}
}

@article{diestelhalinvogler,
title = "Some remarks on universal graphs",
journal = "Combinatorica",
volume = "5",
number = "4",
pages = "283-293",
year = "1985",
author = {R. Diestel and R. Halin and W. Vogler}
}

@article{diestel,
title = "On Universal Graphs With Forbidden Topological Subgraphs",
journal = "European Journal of Combinatorics",
volume = "6",
number = "2",
pages = "175-182",
year = "1985",
author = "R. Diestel"
}

@article{komjathmeklerpach,
author = {P. Komjáth and A. Mekler and J. Pach},
year = "1988",
pages = "158-168",
title = "Some universal graphs",
volume = "64",
number = "2",
journal = "Israel Journal of Mathematics",
}

@book{diestelbook,
year = 2017,
publisher = {Springer},
author = {R. Diestel},
title = {Graph Theory}
}

@article{cherlintallgren,
author = {G. Cherlin and L. Tallgren},
title = {Universal graphs with a forbidden near-path or 2-bouquet},
journal = {Journal of Graph Theory},
volume = {56},
number = {1},
pages = {41-63},
year = {2007}
}

@article{furedikomjath,
author = {Z. Füredi and P. Komjáth},
title = {On the existence of countable universal graphs},
journal = {Journal of Graph Theory},
volume = {25},
number = {1},
pages = {53-58},
year = {1997}
}

@article{richter,
author = {R. B. Richter},
title = {Decomposing infinite 2-connected graphs into 3-connected components},
journal = {The Electronic Journal of Combinatorics},
volume = {11},
number = {1},
pages = {R25},
year = {2004}
}

@article{oporowskioxleythomas,
title = {Typical Subgraphs of 3- and 4-Connected Graphs},
journal = {Journal of Combinatorial Theory, Series B},
volume = {57},
number = {2},
pages = {239-257},
year = {1993},
author = {B. Oporowski and J. Oxley and R. Thomas}
}

@article{halin,
title = {A Note on $k$-Connected Rayless Graphs},
journal = {Journal of Combinatorial Theory, Series B},
volume = {72},
number = {2},
pages = {257-260},
year = {1998},
author = {R. Halin}
}

@book{tutte,
year = 1966,
publisher = {University of Toronto Press},
author = {W. T. Tutte},
title = {Connectivity in Graphs}
}

@article{robertsonseymour,
title = "Graph minors. \uppercase{V}. \uppercase{E}xcluding a planar graph",
journal = {Journal of Combinatorial Theory, Series B},
volume = {41},
number = {1},
pages = {92-114},
year = {1986},
author = {N. Robertson and P. D. Seymour}
}

@article{diestelsurvey,
title = {Simplicial decompositions of graphs: a survey of applications},
journal = {Discrete Mathematics},
volume = {75},
number = {1},
pages = {121-144},
year = {1989},
author = {R. Diestel}
}

@article{wagner2,
title = {Bemerkungen zu \uppercase{H}adwigers \uppercase{V}ermutung},
journal = {Mathematische Annalen},
volume = {141},
pages = {433-451},
year = {1960},
author = {K. Wagner}
}

@article{halinwagner,
title = {Homomorphiebasen von \uppercase{G}raphenmengen},
journal = {Mathematische Annalen},
volume = {147},
pages = {126-142},
year = {1962},
author = {R. Halin and K. Wagner}
}

@article{halin4,
title = {Über einen graphentheoretischen \uppercase{B}asisbegriff und seine \uppercase{A}nwendung auf
\uppercase{F}ärbungsprobleme},
journal = {Diss. Köln},
year = {1962},
author = {R. Halin}
}

@article{huynh,
  title={Universality in minor-closed graph classes},
  author={Tony Huynh and Bojan Mohar and Robert Šámal and Carsten Thomassen and David R. Wood},
  journal={Preprint, arXiv:2109.00327},
  year={2021}
}

@article{georgakopoulos,
  title={On graph classes with minor-universal elements},
  author={Agelos Georgakopoulos},
  journal={Preprint, arXiv:2212.05498},
  year={2022}
}

\end{document}